\newtheorem{thm}{Theorem}[section]
\newtheorem{lem}[thm]{Lemma}
\newtheorem{prop}[thm]{Proposition}
\newtheorem{cor}[thm]{Corollary}
\newtheorem*{theorem*}{Theorem}
\theoremstyle{remark}
\newtheorem{rem}[thm]{Remark}
\newtheorem{defn}[thm]{Definition}
\newtheorem{ex}[thm]{Example}
\numberwithin{equation}{section}
\newcommand{\N}{\mathbb{N}}
\newcommand{\R}{\mathbb{R}}
\newcommand{\C}{\mathbb{C}}
\newcommand{\g}{\mathcal{G}_{G,E}}
\newcommand{\e}{\widetilde{E}}
\newcommand{\gr}{\mathcal{G}}
\newcommand{\Og}{\mathcal{O}_{G,E}}
\newcommand{\La}{\Lambda}
\newcommand{\la}{\lambda}
\newcommand{\Ol}{\mathcal{O}_{G,\Lambda}}
\begin{document}
\title[A dichotomy for simple self-similar graph $C^\ast$-algebras]{A dichotomy for simple self-similar graph $C^\ast$-algebras}

\author[Hossein Larki]{Hossein Larki}

\address{Department of Mathematics\\
Faculty of Mathematical Sciences and Computer\\
Shahid Chamran University of Ahvaz\\
P.O. Box: 83151-61357\\
Ahvaz\\
 Iran}
\email{h.larki@scu.ac.ir}


\date{\today}

\subjclass[2010]{46L05, 46L55}

\keywords{self-similar graph, $C^*$-algebra, pure infiniteness, stable finiteness}

\begin{abstract}

We investigate the pure infiniteness and stable finiteness of the Exel-Pardo $C^*$-algebras $\mathcal{O}_{G,E}$ for countable self-similar graphs $(G,E,\varphi)$. In particular, we associate a specific ordinary graph $\widetilde{E}$ to $(G,E,\varphi)$ such that some properties such as simpleness, stable finiteness or pure infiniteness of the graph $C^*$-algebra $C^*(\widetilde{E})$ imply that of $\mathcal{O}_{G,E}$. Among others, this follows a dichotomy for simple $\mathcal{O}_{G,E}$: if $(G,E,\varphi)$ contains no $G$-circuits, then $\mathcal{O}_{G,E}$ is stably finite; otherwise, $\mathcal{O}_{G,E}$ is purely infinite.

Furthermore, Li and Yang recently introduced self-similar $k$-graph $C^*$-algebras $\mathcal{O}_{G,\Lambda}$. We also show that when $|\Lambda^0|<\infty$ and  $\mathcal{O}_{G,\Lambda}$ is simple, then it is purely infinite.
\end{abstract}

\maketitle

\section{Introduction}

In \cite{exe17}, Exel and Pardo introduced self-similar graph $C^*$-algebras $\Og$ to give a unified framework like graph $C^*$-algebras for the Katsura's \cite{kat08} and Nekrashevych's algebras \cite{nek04,nek05}. These $C^*$-algebras were initially considered in \cite{exe17} only for countable discrete groups $G$ acting on finite graphs $E$ with no sources, and then generalized in \cite{bed17,exe18} for larger classes. Roughly speaking, Exel and Pardo attached an inverse semigroup $\mathcal{S}_{G,E}$ and the tight groupoid $\mathcal{G}_{tight} (\mathcal{S}_{G,E})$ to $(G,E,\varphi)$ such that $\Og\cong C^*(\mathcal{G}_{\mathrm{tight}}(\mathcal{S}_{G,E}))$, and then describe amenability \cite[Corollary 10.18]{exe17}, minimality \cite[Theorem 13.6]{exe17}, and effectivity (or topological principality) \cite[Corollary 14.15]{exe17} of $\mathcal{G}_{\mathrm{tight}}(\mathcal{S}_{G,E})$, and thus simplicity and pure infiniteness of $\Og$ \cite[Section 16]{exe17}, among others. Although only finite graphs are considered in \cite{exe17}, but many arguments and proofs work for countable row-finite graphs with no sources (see \cite{exe18}).

The initial aim of this note comes from a dichotomy for simple groupoid $C^*$-algebras \cite{rai18,bon18}. According to \cite[Theorem 4.7]{rai18} and \cite[Corollary 5.13]{bon18}, a simple reduced $C^*$-algebra $C^*_r(\mathcal{G})$ of ample groupoid $\mathcal{G}$ with an almost unperforated type semigroup is either purely infinite or stable finite. We explicitly describe this dichotomy for self-similar graph $C^*$-algebras $\Og$ by the underlying graphical properties. Here, we consider countable row-finite source-free graphs $E$ over an amenable (countable) group $G$ \cite{bed17,exe18}. However, our results may be generalized to any countable graph $E$ by the desingularization of \cite{exe18}.

We begin in Section 2 by reviewing necessary background on groupoid and self-similar graph $C^*$-algebras. Then, in Section 3, we generalize the Exel-Pardo's characterization of purely infinite $\Og$ to countable self-similar graphs by the groupoid approach (for not necessarily simple cases). Moreover, for certain self-similar graphs $(G,E,\varphi)$, we show that the $C^*$-algebra $\Og$ is purely infinite and simple if and only if the additive monoid of nonzero Murray-von Neumann equivalent projections in $M_\infty(\Og)$ is a group.

In Section 4, we focus on the stable finiteness of $\Og$. We attach a spacial graph $\widetilde{E}$ to $(G,E,\varphi)$ such that some properties of $\Og$- such as simplicity, pure infiniteness, and stable infiniteness- can be derived from those of the graph $C^*$-algebra $C^*(\widetilde{E})$. Then using known results about the graph $C^*$-algebras, we show that a simple $C^*$-algebra $\Og$ is stable finite if and only if the underlying $(G,E,\varphi)$ contains no $G$-circuits. In particular, we deduce a dichotomy: A simple $\Og$ is purely infinite if $(G,E,\varphi)$ has a $G$-circuit; otherwise, it is stable finite.

As the $k$-graph version of Exel-Pardo $C^*$-algebras, Li and Yang introduced self-similar $k$-graphs $(G,\La)$ and associated $C^*$-algebras $\mathcal{O}_{G,\La}$. Briefly, by a groupoid approach, they investigated their properties such as nuclearity \cite[Theorem 6.6(i)]{li18}, amenability \cite[Theorem 5.9]{li18}, and simplicity \cite[Theorem 6.6(ii)]{li18}. In Section 5, We investigate the pure infiniteness of $\mathcal{O}_{G,\La}$ for the nonsimple cases. In particular, we modify and extend \cite[Theorem 6.13]{li18}.

{\bf Acknowledgement.} The author appreciates Enrique Pardo for reviewing the initial version of the article and his helpful comments; in particular, for noting a gap in the proof of Theorem \ref{thm3.7}.


\section{Preliminaries}

\subsection{Groupoid $C^*$-algebras}

We give here a brief introduction to ample groupoids and associated $C^*$-algebras; for more details see \cite{ren80, ana00} for example. A {\it groupoid} is a small category $\gr$ with inverses. The {\it unit space of $\gr$} is the set of identity morphisms, that is $\gr^{(0)}:=\{\alpha^{-1}\alpha:\alpha\in \gr\}$. For each $\alpha\in \gr$, we may define the range $r(\alpha):=\alpha\alpha^{-1}$ and the source $s(\alpha):=\alpha^{-1}\alpha$, which satisfy $r(\alpha)\alpha=\alpha=\alpha s(\alpha)$. Hence, for $\alpha,\beta\in \gr$, the composition $\alpha\beta$ is well-defined in $\gr$ if and only if $s(\alpha)=r(\beta)$. The {\it isotropy subgroupoid of $\gr$} is defined by
$$\mathrm{Iso}(\gr):=\{\alpha\in \gr:s(\alpha)=r(\alpha)\}.$$

We work usually with groupoids $\gr$ endowed with a topology such that the maps $r,s:\gr\rightarrow \gr^{(0)}$ are continuous (in this case, $\gr$ is called a {\it topological groupoid}). A subset $B\subseteq \gr$ is called a {\it bisection} if both restrictions $r|_B$ and $s|_B$ are homeomorphisms.
We say that $\gr$ is {\it ample} in case $\gr$ has a basis of compact and open bisections.

\begin{defn}
Let $\gr$ be a topological groupoid. We say that $\gr$ is {\it effective} if the interior of $\mathrm{Iso}(\gr)$ is just $\gr^{(0)}$. Moreover, $\gr$ is called {\it topologically principal} if $\{u\in \gr^{(0)}: s^{-1}(u)\cap r^{-1}(u)=\{u\}\}$ is dense in $\gr^{(0)}$.
\end{defn}

Note that, when $\gr$ is second-countable, \cite[Proposition 3.3]{ren08} implies that $\gr$ is effective if and only if it is topologically principal. In this paper, we will work frequently with second-countable effective ample groupoids.

We now recall the definition of reduced $C^*$-algebra $C^*_r(\gr)$. Let $\gr$ be an ample groupoid. We write $C_c(\gr)$ for the complex vector space consisting of compactly supported continuous functions on $\gr$, which is an $*$-algebra with the convolution multiplication and the involution $f^*(\alpha):=\overline{f(\alpha^{-1})}$. For each unit $u\in \gr^{(0)}$ and $\gr_u:=s^{-1}(\{u\})$, let $\pi_u:C_c(\gr)\rightarrow B(\ell^2(\gr_u))$ be the left regular $*$-representation defined by
$$\pi_u(f)\delta_\alpha:=\sum_{s(\beta)=r(\alpha)}f(\beta) \delta_{\beta \alpha} \hspace{5mm} (f\in C_c(\gr),~\alpha\in \gr_u).$$
Then the {\it reduced $C^*$-algebra $C^*_r(\gr)$} is the completion of $C_c(\gr)$ under the reduced $C^*$-norm
$$\|f\|_r:=\sup_{u\in \gr^{(0)}}\|\pi_u(f)\|.$$
Moreover, there is a full $C^*$-algebra $C^*(\gr)$ associated to $\gr$, which is the completion of $C_c(\gr)$ taken over all $\|.\|_{C_c(\gr)}$-decreasing representations of $\gr$. Hence, $C^*_r(\gr)$ is a quotient of $C^*(\gr)$, and \cite[Proposition 6.1.8]{ana00} shows that they are equal if the underlying groupoid $\gr$ is amenable.

\begin{defn}[\cite{ror02}]
We say that a $C^*$-algebra $A$ is {\it purely infinite} if every nonzero hereditary $C^*$-subalgebra of $A$ contains an infinite projection.
\end{defn}

The following is analogous to \cite[Theorem 4.1]{bro15} without the minimality assumption.

\begin{prop}\label{prop2.3}
Let $\mathcal{G}$ be a second-countable Hausdorff ample groupoid and let $\mathcal{B}$ be a basis of compact open sets for $\mathcal{G}^{(0)}$. Suppose also that $\mathcal{G}$ is effective. Then $C^*_r(\mathcal{G})$ is purely infinite if and only if $1_V$ is infinite in $C^*_r(\mathcal{G})$ for every $V\in \mathcal{B}$ ($1_V$ is the characteristic function of $V$).
\end{prop}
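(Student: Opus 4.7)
The plan is to prove the two implications separately; the forward direction is a standard consequence of pure infiniteness, while the converse adapts the proof of \cite[Theorem 4.1]{bro15} so as to dispense with the minimality hypothesis used there.

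For the forward direction, suppose $C^*_r(\mathcal{G})$ is purely infinite. Each $1_V$ with $V\in\mathcal{B}$ is a nonzero projection in $C^*_r(\mathcal{G})$, so the hereditary $C^*$-subalgebra $1_V C^*_r(\mathcal{G}) 1_V$ is nonzero and, by hypothesis, contains an infinite projection $p\leq 1_V$. Choosing a projection $q\sim p$ with $q$ a proper subprojection of $p$ and writing $1_V=p+(1_V-p)$ gives $1_V\sim q+(1_V-p)$, a proper subprojection of $1_V$, so $1_V$ itself is infinite.

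For the converse, assume $1_V$ is infinite for every $V\in\mathcal{B}$ and let $A$ be a nonzero hereditary $C^*$-subalgebra of $C^*_r(\mathcal{G})$. Pick $a\in A_+$ with $\|a\|=1$. The key step is to produce $V\in\mathcal{B}$ with $1_V\precsim a$ in the Cuntz sense. Granting this, since $1_V$ is a projection, a standard functional-calculus argument yields $x\in C^*_r(\mathcal{G})$ with $x^*ax=1_V$; then $z:=a^{1/2}x$ satisfies $z^*z=1_V$, so $zz^*$ is a projection Murray--von Neumann equivalent to $1_V$. As $zz^*$ lies in the hereditary subalgebra $\overline{aC^*_r(\mathcal{G})a}\subseteq A$, we obtain an infinite projection in $A$, as required.

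The main obstacle is the construction of $V\in\mathcal{B}$ with $1_V\precsim a$, and here I would follow the template of \cite{bro15}. First approximate $a$ by an element of $C_c(\mathcal{G})_+$, then consider its image under the canonical faithful conditional expectation $E\colon C^*_r(\mathcal{G})\to C_0(\mathcal{G}^{(0)})$ given by restriction to the unit space, and pick $u\in\mathcal{G}^{(0)}$ with $E(a)(u)$ close to $\|E(a)\|$. Because $\mathcal{G}$ is second-countable and effective, \cite[Proposition 3.3]{ren08} provides a dense set of units with trivial isotropy, so $u$ may be chosen so that $r^{-1}(u)\cap s^{-1}(u)=\{u\}$. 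Combining Hausdorffness with trivial isotropy at $u$ permits the standard cutting technique: one separates the off-diagonal support of $a$ from $u$ and obtains a compact open neighbourhood $W$ of $u$ on which $a$ agrees with $E(a)$ up to arbitrary precision. Shrinking $W$ inside a basic set $V\in\mathcal{B}$ and rescaling by a suitable $h\in C_0(V)$ yields $hah$ arbitrarily close to $1_V$, so that $1_V\precsim a$ by R\o rdam's characterisation of Cuntz subequivalence \cite{ror02}. The sole role of minimality in \cite{bro15} is to guarantee arbitrarily small basic neighbourhoods of $u$ inside a prescribed open set; this is now supplied directly by $\mathcal{B}$, and the remainder of the argument carries over unchanged.
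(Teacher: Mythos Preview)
Your proof is correct and follows essentially the same approach as the paper. The paper dispatches the forward direction as immediate and, for the converse, defers the key step of producing a projection $p\in A$ with $p\sim 1_V$ for some $V\in\mathcal{B}$ to \cite[Proposition 5.2]{lar19}; you instead spell out this argument directly by adapting the cutting technique of \cite{bro15}, which is precisely what that reference does.
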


\begin{proof}
The ``only if" implication is immediate. For the converse, suppose that every $1_V$ in $C^*_r(\mathcal{G})$ is infinite for $V\in \mathcal{B}$. Let $A$ be a nonzero hereditary $C^*$-subalgebra of $C^*_r(\mathcal{G})$ and take some positive element $0\neq a\in A$. Using the hereditary property, we may follow the proof of \cite[Proposition 5.2]{lar19} to find a projection $p\in A$ and some $V\in \mathcal{B}$ such that $p\sim 1_V$ in the Murray-von Nuemann sense. Since the infiniteness is preserved under $\sim$, then $p$ is an infinite projection, concluding the result.
\end{proof}

\subsection{Graph $C^*$-algebras}

Let $E=(E^0,E^1,r,d)$ be a directed graph with the vertex set $E^0$, the edge set $E^1$, and the range and domain maps $r,d:E^1\rightarrow E^0$. We say that $E$ is {\it row-finite} if each vertex receives at most finitely many edges. A {\it source in $E$} is a vertex $v\in E^0$ which receives no edges, i.e. $d^{-1}(v)=\emptyset$. We will write by $E^*$ the set of finite paths in $E$, that is
$$E^*:=\bigcup_{n\geq 0}E^n=\bigcup_{n\geq 0}\{\alpha=e_1\ldots e_n: e_i\in E^1, d(e_i)=r(e_{i+1})\}.$$
Then one may extend $r,d:E^*\rightarrow E^0$ by defining $r(\alpha)=r(e_1)$ and $d(\alpha)=d(e_n)$ for every path $\alpha=e_1\ldots e_n\in E^n$. Throughout the paper, we will consider only countable directed graphs.

Given a directed graph $E$, a {\it Cuntz-Krieger $E$-family} is a collection $\{p_v,s_e:v\in E^0,e\in E^1\}$ of pairwise orthogonal projections $p_v$ and partial isometries $s_e$ with the following relations
\begin{enumerate}
  \item $s_e^*s_e=p_{d(e)}$ for every $e\in E^1$,
  \item $s_es_e^*\leq p_{r(e)}$ for every $e\in E^1$, and
  \item $p_v=\sum_{d(e)=v}s_e s_e^*$ for all vertices $v$ with $0<|d^{-1}(v)|<\infty$.
\end{enumerate}
The {\it graph $C^*$-algebra} $C^*(E)$ is the universal $C^*$-algebra generated by a Cuntz-Krieger $E$-family $\{p_v,s_e\}$ \cite{rae05}. By the above relations, for $e_1,\ldots,e_n\in E^1$, $s_{e_1}\ldots s_{e_n}$ is nonzero if and only if $\alpha:=e_1\ldots e_n$ is a path in $E$; in this case, we write $s_\alpha:=s_{e_1}\ldots s_{e_n}$.

\subsection{Self-similar graphs and their $C^*$-algebras}

Let $G$ be a countable discrete group. An {\it action} $G\curvearrowright E$ is a map $G\times (E^0\cup E^1)\rightarrow E^0\cup E^1$, denoted by $(g,a)\mapsto ga$, such that the action of each $g\in G$ on $E$ gives a graph automorphism.

A {\it self-similar graph} is a triple $(G,E,\varphi)$ such that
\begin{enumerate}
  \item $E$ is a directed graph,
  \item $G$ acts on $E$ by automorphisms, and
  \item $\varphi:G\times E^1\rightarrow G$ is a 1-cocycle for $G\curvearrowright E$ satisfying $\varphi(g,e)v=gv$ for every $g\in G$, $e\in E^1$, and $v\in E^0$.
\end{enumerate}

\begin{rem}
According to \cite[Proposition 2.4]{exe17}, we may extend inductively the action $G\curvearrowright E$ and the cocycle $\varphi$ on the finite path space $E^*$ satisfying the desired relations \cite[Equation 2.6]{exe17}. Indeed, if $\alpha=\alpha_1\alpha_2\in E^*$, then we define
$$g \alpha=(g \alpha_1)(\varphi(g,\alpha_1)\alpha_2) \hspace{5mm}  \mathrm{and} \hspace{5mm} \varphi(g,\alpha)=\varphi(\varphi(g,\alpha_1),\alpha_2).$$
\end{rem}

\begin{defn}[\cite{exe17,exe18}]
Let $(G,E,\varphi)$ be a (countable) self-similar graph. Then $\Og$ is the universal $C^*$-algebra generated by
$$\{p_v,s_e:v\in E^0, e\in E^1\} \cup \{u_g p_v:g\in G,v\in E^0\}$$
satisfying the following properties:
\begin{enumerate}
  \item $\{p_v,s_e:v\in E^0,e\in E^1\}$ is a Cuntz-Krieger $E$-family.
  \item $u:G\rightarrow \mathcal{M}(\Og)$, $g\mapsto u_g$, is a unitary $*$-representation of $G$ on the multiplier algebra $\mathcal{M}(\Og)$.
  \item $u_gp_v=p_{gv}u_g$ for every $g\in G$ and $v\in E^0$.
  \item $u_gs_e=s_{ge}u_{\varphi(g,e)}$ for every $g\in G$ and $e\in E^1$.
\end{enumerate}
We usually use the notation $\Og$ instead of $\mathcal{O}_{(G,E,\varphi)}$ for convenience. Also, we will write each $u_g p_v$ by $u_{gv}$. Then one may easily verify relations (b)-(e) of \cite[Definition 2.2]{exe18}.
\end{defn}

{\bf Standing assumption.} All self-similar graphs $(G,E,\varphi)$ considered in this paper will be countable, row-finite and source-free.

\subsection{The groupoid associated to $(G,E,\varphi)$}

In \cite[Section 4]{exe17}, Exel and Pardo associated an inverse semigroup $\mathcal{S}_{G,E}$ to a self-similar graph $(G,E,\varphi)$ with finite graph $E$. They then showed that $\Og\cong C^*_{tight}(\mathcal{S}_{G,E})\cong C^*(\g)$ where $\g$ is the groupoid of germs for the action of $\mathcal{S}_{G,E}$ on $E^\infty$ \cite[Corollary 6.4 and Proposition 8.4]{exe17}. Note that the constructions of $\mathcal{S}_{G,E}$ and $\g$ in \cite{exe17} may be extended for countable row-finite, source-free self-similar graphs $(G,E,\varphi)$ with small modifications. We give a brief review of it here for convenience. So, fix a row-finite self-similar graph $(G,E,\varphi)$ without sources. Define the $*$-inverse semigroup $\mathcal{S}_{G,E}$ as
$$\mathcal{S}_{G,E}=\{(\alpha,g,\beta):\alpha,\beta\in E^*,g\in G, d(\alpha)=gd(\beta)\}\cup \{0\}$$
with the operations
$$(\alpha,g,\beta)(\gamma,h,\delta):=\left\{
                                      \begin{array}{ll}
                                        (\alpha,g\varphi(h,\varepsilon),\delta h\varepsilon) & \mathrm{if} ~~\beta=\gamma \varepsilon \\
                                        (\alpha g\varepsilon,\varphi(g,\varepsilon)h,\delta) & \mathrm{if} ~~ \gamma=\beta\varepsilon\\
                                        0    & \mathrm{otherwise}
                                      \end{array}
                                    \right.
$$
and $(\alpha,g,\beta)^*:=(\beta,g^{-1},\alpha)$.

Let $E^\infty$ be the space one-sided infinite paths of the form
$$x=e_1e_2\ldots  \hspace{4mm} \mathrm{such ~ that}  \hspace{5mm} d(e_i)=r(e_{i+1}) ~~~ \mathrm{for} ~~~ i\geq 1.$$
By \cite[Proposition 8.1]{exe17}, there is a unique action $G\curvearrowright E^\infty$ as follows: for each $g\in G$ and $x=e_1e_2\ldots \in E^\infty$, there is a unique infinite path $gx=f_1f_2\ldots$ such that
$$f_1f_2\ldots f_n=g(e_1e_2\ldots e_n) \hspace{5mm} (\mathrm{for ~ all}~~ n\geq 1).$$
Moreover, we may consider the action of each $(\alpha,g,\beta)\in \mathcal{S}_{G,E}$ on $x=\beta \hat{x}\in E^\infty$ by $(\alpha,g, \beta)\cdot x=\alpha(g\hat{x})$. Then $\g$ is the groupoid of germs of the action of $\mathcal{S}_{G,E}$ on $E^\infty$, that is
$$\g=\big\{[\alpha,g,\beta;x]: x=\beta\hat{x}\big\}.$$
Recall that two germs $[s;x],[t;y]$ in $\g$ are equal if and only if $x=y$ and there exists an idempotent $0\neq e\in \mathcal{S}_{G,E}$ such that $e\cdot x=x$ and $se=te$. The unit space of $\g$ is
$$\g^{(0)}=\{[\alpha,1_G,\alpha;x]:x=\alpha \hat{x}\},$$
which is identified with $E^\infty$ by $[\alpha,1_G,\alpha;x]\mapsto x$. Then, the range and source maps are defined by
$$r([\alpha,g,\beta;\beta\hat{x}])=\alpha(g\hat{x}) \hspace{5mm} \mathrm{and} \hspace{5mm} s([\alpha,g,\beta;\beta\hat{x}])=\beta\hat{x}.$$
Following \cite[Section 10]{exe17}, we endow $\g$ with the topology generated by compact open bisections of the form
$$\Theta(\alpha,g,\beta;Z(\gamma)):=\{[\alpha,g,\beta;y]\in \g: y\in Z(\gamma)\}$$
where $\gamma\in E^*$ and $Z(\gamma):=\{\gamma x: x\in s(\gamma)E^\infty\}$. Hence, $\g$ is an ample groupoid.

\begin{defn}
We say that $(G,E,\varphi)$ is {\it pseudo free} if for every $g\in G$ and $e\in E^1$,
$$ge=e \hspace{2mm} \mathrm{and} \hspace{2mm} \varphi(g,e)=1_G ~~~ \Longrightarrow ~~g=1_G.$$
\end{defn}

In the end of this section, we recall briefly the following results from \cite{exe17} for convenience. Although they are proved there for finite self-similar graphs with no sources, but we can obtain them for countable cases by a same way (see also \cite{exe18}).

\begin{prop}\label{prop2.7}
Let $(G,E,\varphi)$ be a pseudo free self-similar graphs without sources and let $\g$ be the associated groupoid as above. Then
\begin{itemize}
 \item[(1)] $\g\cong \gr_{tight}(\mathcal{S}_{G,E})$ \cite[Theorem 8.19]{exe17}, $\g$ is Hausdorff \cite[Proposition 12.1]{exe17}, and $\Og\cong C^*(\g)$ \cite[Theorem 9.6]{exe17}.
 \item[(2)] If moreover $G$ is an amenable group, then $\g$ is an amenable groupoid in the sense of \cite{ana00}. In particular, we have $\Og\cong C^*(\g)\cong C_r^*(\g)$ by \cite[Proposition 6.1.8]{ana00}.
\end{itemize}
\end{prop}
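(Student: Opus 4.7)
The plan is to show that the entire proof package of \cite[Section 8--12]{exe17}, which is written for finite self-similar graphs without sources, transfers essentially verbatim to the countable row-finite source-free setting once one checks that the three places where finiteness is potentially used cause no trouble. These are: (i) the identification of tight filters of the idempotent semilattice $E(\mathcal{S}_{G,E})$ with $E^\infty$, (ii) the compactness of the basic bisections $\Theta(\alpha,g,\beta;Z(\gamma))$, and (iii) the Cuntz--Krieger relation (c) in the definition of $\Og$.

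For part (1), I would first reprove \cite[Theorem 8.19]{exe17} by describing explicitly the tight filters on the meet semilattice of nonzero idempotents $\{(\alpha,1_G,\alpha)\}\cup\{0\}$. The cover of an idempotent $(\alpha,1_G,\alpha)$ is the finite set $\{(\alpha e,1_G,\alpha e):d(\alpha)=r(e)\}$, which is finite and nonempty precisely because $E$ is row-finite and source-free; so each tight filter corresponds, via a decreasing chain of finite paths, to an element of $E^\infty$, and conversely each $x\in E^\infty$ yields a tight filter. The action of $\mathcal{S}_{G,E}$ on tight filters then matches the action on $E^\infty$ defined before Definition~2.6, so the groupoid of germs $\g$ coincides with $\gr_{tight}(\mathcal{S}_{G,E})$. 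The Hausdorff statement follows by copying the argument of \cite[Proposition 12.1]{exe17}: any germ $[\alpha,g,\beta;x]$ in the closure of $\g^{(0)}$ forces infinitely many prefixes of $x$ to be fixed by $g$ with trivial cocycle, and pseudo-freeness then forces $g=1_G$ and $\alpha=\beta$. This argument is local on paths and does not require $E^0$ to be finite. Finally, the isomorphism $\Og\cong C^*(\g)$ is proved as in \cite[Theorem 9.6]{exe17} by sending $p_v,s_e,u_gp_v$ to the characteristic functions $1_{\Theta(v,1,v;Z(v))}$, $1_{\Theta(e,1,d(e);Z(d(e)))}$, $1_{\Theta(gv,g,v;Z(v))}$ respectively, verifying the four defining relations (now relation (c) of the Cuntz--Krieger family involves only finite sums by row-finiteness), and then invoking a gauge-invariant uniqueness theorem for $C^*(\g)$ to obtain injectivity.

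For part (2), the amenability of $\g$ is obtained via the skew-product argument of \cite[Section 10]{exe17}: the map $c:\g\to G$ sending a germ $[\alpha,g,\beta;x]$ to a suitable $G$-valued cocycle exhibits $\g$ as a skew product $\g\cong \g_c\rtimes G$ with $\g_c:=c^{-1}(1_G)$. The kernel groupoid $\g_c$ is a groupoid of germs whose orbit equivalence relation is approximately finite in the sense of \cite{ana00} (it is a countable directed-graph path groupoid decorated by $G$-orbits and is AF in the row-finite no-sources case), hence amenable; and $G$ is amenable by hypothesis. Amenability is preserved under skew products by a group action on an amenable groupoid \cite[Proposition 5.1.2]{ana00}, so $\g$ is amenable, and \cite[Proposition 6.1.8]{ana00} gives $C^*(\g)\cong C^*_r(\g)$.

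The main obstacle I anticipate is the tight-filter identification in step~(i), because the general theory of tight groupoids requires one to verify the cover condition filter by filter; here the source-free and row-finite hypotheses make each cover finite and nonempty, which is exactly what is needed to mimic Exel--Pardo's argument without the global finiteness of $E^0$. Once that is in hand, the remaining steps are routine adaptations.
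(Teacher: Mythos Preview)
The paper does not give a proof of this proposition at all: it is stated as a recollection of results from \cite{exe17}, with the remark preceding it that ``although they are proved there for finite self-similar graphs with no sources, we can obtain them for countable cases by a same way (see also \cite{exe18}).'' So your proposal already goes further than the paper by sketching how the extension works. Your outline for part~(1) is reasonable and matches the intended strategy: the tight-filter identification, the Hausdorff argument from pseudo-freeness, and the generators-and-relations isomorphism all localize to finite paths and only need row-finiteness and the absence of sources, exactly as you say.

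Part~(2), however, contains a genuine gap. The map $c:\g\to G$ you propose, sending $[\alpha,g,\beta;x]$ to ``a suitable $G$-valued cocycle,'' is not well defined on germs. If $\gamma=\beta\varepsilon$ is a longer initial segment of $x$, then $[\alpha,g,\beta;x]=[\alpha(g\varepsilon),\varphi(g,\varepsilon),\gamma;x]$, and the $G$-label changes from $g$ to $\varphi(g,\varepsilon)$; pseudo-freeness does not prevent this. Hence there is no canonical $G$-valued cocycle on $\g$, and the skew-product decomposition $\g\cong\g_c\rtimes G$ you invoke does not exist in general. The amenability argument in \cite[Section~10]{exe17} proceeds quite differently: one first shows that $\Og$ is nuclear (via a Cuntz--Pimsner model over a nuclear coefficient algebra built from $C^*(E)$ and the amenable group $G$), and then deduces amenability of the Hausdorff \'etale groupoid $\g$ from nuclearity of its full $C^*$-algebra. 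The $\Z$-valued cocycle $[\alpha,g,\beta;x]\mapsto |\alpha|-|\beta|$ \emph{is} well defined and plays a role in that analysis, but it is not the mechanism you describe. If you want to sketch part~(2), you should follow the nuclearity route rather than attempt a direct groupoid-level skew product by $G$.
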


\begin{prop}[{\cite[Corollary 14.13]{exe17} and \cite[Theorem 4.4]{exe18}}]\label{prop2.8}
Let $(G,E,\varphi)$ be a pseudo free self-similar graph with no sources. Then $\g$ is effective\footnote{Note that the `effective' property of groupoids is called {\it essentially principal} in \cite{exe17, exe18}.} if and only if the following properties hold:
\begin{itemize}
  \item[(1)] Every $G$-circuit in $E$ has an entry, and
  \item[(2)] for every $v\in E^0$ and $1_G\neq g\in G$, the action of $g$ on $Z(v)$ is nontrivial (i.e., there is $x\in Z(v)$ such that $g.x\neq x$).
\end{itemize}
\end{prop}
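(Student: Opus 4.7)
My plan is to analyze effectiveness through the canonical basis of compact open bisections $\Theta(\alpha,g,\beta;Z(\gamma))$ of $\g$. A germ $[\alpha,g,\beta;x]$ with $x=\beta\hat{x}$ lies in $\mathrm{Iso}(\g)$ precisely when $\alpha(g\hat{x})=\beta\hat{x}$, so effectiveness amounts to the statement that no basic bisection sits inside $\mathrm{Iso}(\g)\setminus\g^{(0)}$. By refining, one may take $\gamma=\beta\delta$ with $|\delta|$ as large as convenient, since the graph is row-finite and source-free.

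For the implication $(1)\wedge(2)\Longrightarrow$ effective, I would suppose toward a contradiction that some basic bisection $\Theta(\alpha,g,\beta;Z(\beta\delta))\subseteq\mathrm{Iso}(\g)\setminus\g^{(0)}$, and split on the relative lengths. If $|\alpha|=|\beta|$, comparing prefixes of length $|\alpha|$ yields $\alpha=\beta$ and $g(\delta y)=\delta y$ for every $y\in Z(d(\delta))$; writing $g(\delta y)=(g\delta)\,\varphi(g,\delta)y$ gives $g\delta=\delta$ and $\varphi(g,\delta)$ acting trivially on $Z(d(\delta))$. Applying pseudo-freeness edgewise along $\delta$ then forces either $g=1_G$---impossible, since the germ would be a unit---or $\varphi(g,\delta)\neq 1_G$, which directly violates (2) at the vertex $v=d(\delta)$. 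If $|\alpha|<|\beta|$, write $\beta=\alpha\mu$, so that $g(\delta y)=\mu\delta y$ holds identically on $Z(d(\delta))$; after further extending $\delta$ so that $|\delta|\geq|\mu|$ and splitting $\delta=\delta'\delta''$ with $|\delta''|=|\mu|$, the equation yields $g\delta=\mu\delta'$ and the identity $\varphi(g,\delta)y=\delta''y$ for every $y\in Z(d(\delta))$. Since a graph automorphism cannot map every edge leaving $d(\delta)$ to the unique first edge of $\delta''$ unless exactly one such edge exists, iterating the cocycle identity produces an entryless periodic path of the form $\delta''(\varphi(g,\delta)\delta'')\ldots$, which is exactly a $G$-circuit without entry, contradicting (1). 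The case $|\alpha|>|\beta|$ reduces to this one by inversion.

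For the converse, if (2) fails, take $g\neq 1_G$ and $v\in E^0$ with $g\cdot x=x$ for every $x\in Z(v)$; the bisection $\Theta(v,g,v;Z(v))$ then lies in $\mathrm{Iso}(\g)\setminus\g^{(0)}$, the exclusion from the unit space being guaranteed by pseudo-freeness. If (1) fails, a $G$-circuit $(\mu,g)$ without entry determines a unique infinite path $\mu\,(g\mu)(g^2\mu)\cdots$ inside $Z(\mu)$, and a short calculation shows that the corresponding basic bisection, built from $(\mu,g,r(\mu))$ acting on this cylinder, sits in $\mathrm{Iso}(\g)$ away from the unit space.

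The principal obstacle is the unequal-length case of the forward direction: one must simultaneously extract a $G$-circuit from the pointwise identity $g(\delta y)=\mu\delta y$ on a full cylinder and verify that this circuit has no entry. The decisive step is the observation above that $\varphi(g,\delta)$ being a bijective automorphism of the edge set at $d(\delta)$ is incompatible with sending every such edge into a fixed prefix, unless the out-degree equals one; iterating through the cocycle then yields branching-free behaviour along the entire periodic part, which is exactly the absence of an entry.
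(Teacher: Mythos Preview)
The paper does not prove Proposition~\ref{prop2.8}; it is stated as a citation of \cite[Corollary~14.13]{exe17} and \cite[Theorem~4.4]{exe18}, with no argument supplied. So there is no in-paper proof to compare against.

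Your sketch follows the natural groupoid-of-germs strategy that those references use, and the overall architecture is sound: effectiveness is tested on basic bisections, the equal-length case reduces immediately to condition~(2) via pseudo-freeness, and the unequal-length case must produce an entryless $G$-circuit to contradict~(1). A few remarks on execution. First, you can simplify by absorbing $\delta$ into the triple at the outset: since $[\alpha,g,\beta;\beta\delta y]=[\alpha(g\delta),\varphi(g,\delta),\beta\delta;\beta\delta y]$, you may assume $\gamma=\beta$ from the start, which removes the ad hoc step ``extend $\delta$ so that $|\delta|\ge|\mu|$'' and cleans up the bookkeeping. Second, in the unequal-length case it is worth noting that the compatibility $d(\alpha)=g\,d(\beta)$ already gives $r(\mu)=g\,d(\mu)$ when $\beta=\alpha\mu$, so $(g^{-1},\mu)$ is a $G$-circuit before any iteration; the genuine work is only the entrylessness, and your bijectivity-versus-constant-image observation does force $|r^{-1}(v)|=1$ along the full periodic tail once you iterate the cocycle. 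Third, for the converse when (1) fails, you should make explicit why the germ $[\mu,g,r(\mu);x]$ is not a unit: equality with a unit would require $\mu(g\gamma)=\gamma$ for some finite $\gamma$, which is impossible on length grounds since $|\mu|\ge 1$. Similarly, when (2) fails, the germ $[v,g,v;x]$ being a non-unit uses pseudo-freeness (to rule out $g\gamma=\gamma$ with $\varphi(g,\gamma)=1_G$), which you mention but do not spell out. These are refinements rather than gaps; the plan is correct.
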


\section{Purely infinite self-similar graph $C^\ast$-algebras}

In \cite[Corollary 16.3]{exe17} and \cite[Corollary 4.7]{exe18}, it is shown that when $\Og$ is simple and $(G,E,\varphi)$ contains a $G$-circuit, then $\Og$ is purely infinite. In this section, we study purely infinite $C^*$-algebras $\Og$ of countable self-similar graphs in the sense of \cite{ror02} without the simplicity assumption. Our main result here is a generalization of \cite[Theorem 16.2]{exe17} to countable self-similar graphs. Note that there is another well-known notion of pure infiniteness from \cite{kirch00} which is equivalent to that of \cite{ror02} for the simple cases. Moreover, our results in this section may be generalized for the Kirchberg-R{\o}rdam's notion using \cite[Corollary 3.15]{kirch00} and the ideal structure \cite[Corollary 6.15]{lal19}.

\begin{thm}\label{thm3.1}
Let $(G,E,\varphi)$ be a pseudo free self-similar graph over an amenable group $G$. Suppose also that $(G,E,\varphi)$ satisfies conditions (1) and (2) of Proposition \ref{prop2.8} (i.e., the groupoid $\g$ is effective). Then $\Og$ is purely infinite if and only if every vertex projection $s_v$ is infinite in $\Og$.
\end{thm}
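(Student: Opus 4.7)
My plan is to reduce the theorem to Proposition~\ref{prop2.3} applied to the natural basis $\mathcal{B}=\{Z(\alpha):\alpha\in E^*\}$ of compact open cylinder sets for $\g^{(0)}\cong E^\infty$. The hypotheses of Proposition~\ref{prop2.3} will follow from the standing assumptions together with Proposition~\ref{prop2.7}: the groupoid $\g$ is ample by construction, Hausdorff by Proposition~\ref{prop2.7}(1), and second-countable since $E$ and $G$ are countable, while effectivity is built into the hypothesis. Amenability of $G$ combined with Proposition~\ref{prop2.7}(2) also gives $\Og\cong C^*(\g)\cong C^*_r(\g)$, so I may freely transfer between the two pictures.

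The crux is then to identify, under the isomorphism $\Og\cong C^*_r(\g)$, the characteristic function $1_{Z(\alpha)}$ with the projection $s_\alpha s_\alpha^*$ for each $\alpha\in E^*$. This should follow by unpacking the construction of $\g$: the bisection $\Theta(\alpha,1_G,\alpha;Z(d(\alpha)))$ has both range and source equal to $Z(\alpha)$, and under the isomorphism of \cite[Theorem 9.6]{exe17} its indicator function corresponds to $s_\alpha s_\alpha^*$. With this identification in hand, the Cuntz--Krieger relation $s_\alpha^* s_\alpha = p_{d(\alpha)}$ yields the Murray--von Neumann equivalence $1_{Z(\alpha)}=s_\alpha s_\alpha^*\sim p_{d(\alpha)}$. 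Since infiniteness is preserved under this equivalence, and since $d(\alpha)$ ranges over all of $E^0$ as $\alpha$ ranges over $E^*$ (take $\alpha=v$ a trivial path), infiniteness of every $1_{Z(\alpha)}$ is equivalent to infiniteness of every vertex projection $p_v$. Invoking Proposition~\ref{prop2.3} then yields both directions of the theorem at once.

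I do not foresee a serious obstacle; the whole argument is essentially a clean reduction to Proposition~\ref{prop2.3}. The most delicate technical point is the identification of $1_{Z(\alpha)}$ with $s_\alpha s_\alpha^*$ under the groupoid isomorphism of \cite[Theorem 9.6]{exe17}, which requires tracing carefully through the construction of the tight groupoid and its $C^*$-algebra at the level of generators, but this is a standard computation in the Exel--Pardo framework.
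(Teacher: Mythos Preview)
Your proposal is correct and follows essentially the same route as the paper's own proof: reduce to Proposition~\ref{prop2.3} via the identifications $\Og\cong C^*_r(\g)$ (from Proposition~\ref{prop2.7}) and $1_{Z(\alpha)}=s_\alpha s_\alpha^*$, then use the Murray--von Neumann equivalence $s_\alpha s_\alpha^*\sim s_{d(\alpha)}$ to pass between infiniteness of the cylinder projections and of the vertex projections. If anything, you are slightly more explicit than the paper about why $1_{Z(\alpha)}$ corresponds to $s_\alpha s_\alpha^*$ and about the ambient hypotheses (second-countability, Hausdorffness) needed to invoke Proposition~\ref{prop2.3}.
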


\begin{proof}
We must prove the ``if" implication only. So suppose that for every $v\in E^0$, $s_v$ is infinite in $\Og$. Let $\mathcal{G}=\g$ be the groupoid associated to $(G,E,\varphi)$. By Proposition \ref{prop2.7}(2), $\mathcal{G}$ is amenable, so $C^*_r(\mathcal{G})=C^*(\mathcal{G})=\Og$. We know that the cylinders $\{Z(\alpha):\alpha\in E^*\}$ is a basis of compact open sets for the topology induced on $E^\infty=\mathcal{G}^{(0)}$. Moreover, Proposition \ref{prop2.8} says that $\mathcal{G}$ is effective. Hence, Proposition \ref{prop2.3} implies that $\Og=C^*_r(\mathcal{G})$ is purely infinite if and only if $\{1_{Z(\alpha)}=s_\alpha s_\alpha^*:\alpha\in E^*\}$ are all infinite projections in $\Og$. Now since $s_\alpha s_\alpha^*\sim s_\alpha^*s_\alpha=s_{d(\alpha)}$ and the infiniteness passes through Murray-von Neumann equivalence, we conclude the result.
\end{proof}

\begin{defn}
Let $v,w\in E^0$. We say that $v$ receives a {\it $G$-path from $w$} or $w$ {\it connects to $v$ by a $G$-path}, say $v\gtrsim w$, if there exist $\alpha\in E^*$ and $g\in G$ such that $r(\alpha)=v$ and $d(\alpha)=gw$. By \cite[Proposition 13.2]{exe17}, this is equivalent to
$$\exists \alpha\in E^*, ~~ \exists g\in G \hspace{3mm} \mathrm{such~ that} \hspace{3mm} r(\alpha)=gv ~~\mathrm{and} ~~ d(\alpha)=w.$$
\end{defn}

\begin{lem}\label{lem3.3}
Let $(G,E,\varphi)$ be a self-similar graph. For $v,w\in E^0$ and $\alpha,\beta\in E^*$, we have
\begin{itemize}
  \item[(1)] If $v=gw$ for some $g\in G$, then $s_v\sim s_w$ in the Murray-von Neumann sense.
  \item[(2)] If $v$ receives a $G$-path from $w$, then $s_v\succsim s_w$.
  \item[(3)] If $\beta=g\alpha$ for some $g\in G$, then $s_\beta s_\beta^* \sim s_\alpha s_\alpha^*$.
\end{itemize}
\end{lem}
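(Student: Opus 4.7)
The plan is to exploit the covariance relations of the Exel-Pardo generators: the key observation is that $u_g$ is a unitary in $\mathcal{M}(\Og)$ and intertwines vertex projections via $u_g p_v = p_{gv} u_g$, so multiplying projections by $u_g$ produces partial isometries witnessing Murray-von Neumann equivalences.

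\textbf{Part (1).} Given $v = gw$, I would set $x := u_g p_w \in \Og$ (this lies in $\Og$ because $u_g p_w = p_{gw} u_g$, so $x = p_v u_g$ as well, and either form exhibits it as a product of a multiplier and an element of $\Og$). Since $u_g$ is unitary, $x^* x = p_w u_g^{-1} u_g p_w = p_w$, and $x x^* = u_g p_w u_g^{-1} = p_{gw} u_g u_g^{-1} = p_v$. So $x$ is a partial isometry giving $p_w \sim p_v$.

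\textbf{Part (2).} From the definition of $\gtrsim$, pick $\alpha\in E^*$ and $g\in G$ with $r(\alpha)=v$ and $d(\alpha)=gw$. Then $s_\alpha$ is a partial isometry with $s_\alpha^*s_\alpha = p_{d(\alpha)} = p_{gw}$ and $s_\alpha s_\alpha^* \leq p_{r(\alpha)} = p_v$, so $p_{gw} \precsim p_v$. Combining with part (1) gives $p_w \sim p_{gw} \precsim p_v$, as required.

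\textbf{Part (3).} The main computational step is to upgrade the covariance $u_g s_e = s_{ge} u_{\varphi(g,e)}$ from edges to arbitrary paths: by induction on $|\alpha|$, using the extended action $g(\alpha_1\alpha_2) = (g\alpha_1)(\varphi(g,\alpha_1)\alpha_2)$ and the cocycle identity $\varphi(g,\alpha_1\alpha_2) = \varphi(\varphi(g,\alpha_1),\alpha_2)$ stated in the remark, I would verify that
\[
u_g s_\alpha = s_{g\alpha}\, u_{\varphi(g,\alpha)} \qquad (\alpha\in E^*).
\]
Applied to $\beta = g\alpha$, this gives $s_\beta = u_g s_\alpha u_{\varphi(g,\alpha)}^{-1}$, so
\[
s_\beta s_\beta^* = u_g s_\alpha u_{\varphi(g,\alpha)}^{-1} u_{\varphi(g,\alpha)} s_\alpha^* u_g^{-1} = u_g\, s_\alpha s_\alpha^*\, u_g^{-1}.
\]
Then the element $y := u_g s_\alpha s_\alpha^* \in \Og$ (which equals $s_\beta u_{\varphi(g,\alpha)} s_\alpha^*$ by the formula above, hence genuinely lies in $\Og$) is a partial isometry with $y^*y = s_\alpha s_\alpha^*$ and $yy^* = s_\beta s_\beta^*$, yielding the desired equivalence.

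The only nontrivial step is the inductive path formula in (3); everything else is a direct use of the unitarity of $u_g$ and the defining relations. No serious obstacle is anticipated, provided one tracks the cocycle carefully and remembers to rewrite $u_g s_\alpha s_\alpha^*$ in a form manifestly belonging to $\Og$.
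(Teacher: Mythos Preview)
Your proof is correct. Parts (1) and (2) match the paper's argument essentially verbatim (up to the paper's harmless typo of writing $u_g s_v$ where $u_g s_w$ is meant).

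For part (3) your route differs from the paper's. You establish the path-level covariance $u_g s_\alpha = s_{g\alpha} u_{\varphi(g,\alpha)}$ by induction and deduce the stronger fact that $s_\beta s_\beta^*$ is actually unitarily conjugate to $s_\alpha s_\alpha^*$ via $u_g$. The paper instead avoids the inductive formula entirely and just chains equivalences through the source vertices:
\[
s_\beta s_\beta^* \sim s_\beta^* s_\beta = s_{d(\beta)} = s_{g\,d(\alpha)} \sim s_{d(\alpha)} = s_\alpha^* s_\alpha \sim s_\alpha s_\alpha^*,
\]
invoking part (1) for the middle step. The paper's argument is shorter and uses nothing beyond (1), while yours yields a sharper conclusion (unitary equivalence rather than mere Murray--von Neumann equivalence) at the cost of proving the path covariance; both are entirely adequate for the lemma as stated.
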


\begin{proof}
(1). If $v=gw$, then we have $s_v=(u_gs_v)^*(u_g s_v)$ and
$$(u_gs_v)(u_g s_v)^*=(s_{gv}u_g)(s_{gv}u_g)^*=s_wu_gu_g^* s_w=s_w,$$
concluding $s_v\sim s_w$.

For (2), suppose that there exist $\alpha\in E^*$ and $g\in G$ such that $r(\alpha)=v$ and $d(\alpha)=gw$. Then, by the Cuntz-Krieger relations,
$$s_v\geq s_\alpha s_\alpha^* \sim s_\alpha^* s_\alpha=s_{d(\alpha)}=s_{gw}\sim s_w,$$
and consequently $s_v \succsim s_w$.

For (3), if $\beta=g\alpha$, then by part (1) we have
$$s_\beta s_\beta^*\sim s_\beta^* s_\beta=s_{d(\beta)}=s_{g.d(\alpha)}\sim s_{d(\alpha)}=s_\alpha^* s_\alpha \sim s_\alpha s_\alpha^*,$$
giving $s_\beta s_\beta^*\sim s_\alpha s_\alpha^*$.
\end{proof}

\begin{prop}\label{prop3.4}
Let $(G,E,\varphi)$ be a pseudo free self-similar graph over an amenable group $G$. Suppose that conditions (1) and (2) of Proposition \ref{prop2.8} hold. Then
\begin{itemize}
  \item[(1)] If every $v\in E^0$ receives a $G$-path from a $G$-circuit, then $\Og$ is purely infinite.
  \item[(2)] If the graph $C^*$-algebra $C^*(E)$ is purely infinite, then so is $\Og$.
\end{itemize}
\end{prop}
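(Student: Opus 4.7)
The plan is to apply Theorem~\ref{thm3.1} in both parts, reducing each to showing that every vertex projection $s_v$ is infinite in $\Og$. Throughout I will use the elementary fact that a projection dominating an infinite one is itself infinite: if $s_v\ge q$ with $q$ infinite and $q'<q$, $q'\sim q$, then $(s_v-q)+q'$ is a projection strictly below $s_v$ and Murray--von Neumann equivalent to $s_v$.

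For (1), given $v\in E^0$ I would use the hypothesis and Lemma~\ref{lem3.3}(2) to produce a vertex $w$ on a $G$-circuit with $s_v\succsim s_w$. A short cyclic-rotation argument---replacing $\alpha=\alpha_1\alpha_2$ by $\alpha_2\cdot(g\alpha_1)$---lets me assume $\alpha\in E^*$ is a $G$-circuit with $r(\alpha)=w$ and $d(\alpha)=gw$ for some $g\in G$. The element $V:=s_\alpha u_g\in\Og$ should then satisfy
\[
V^*V=u_g^*s_\alpha^*s_\alpha u_g=u_g^*p_{gw}u_g=s_w,\qquad VV^*=s_\alpha s_\alpha^*\le s_w,
\]
using $u_g^*p_{gv}u_g=p_v$. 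Under the identification $\Og\cong C_r^*(\g)$ of Proposition~\ref{prop2.7}, these correspond to the characteristic functions $1_{Z(w)}$ and $1_{Z(\alpha)}$ respectively. The entry to $\alpha$ supplied by Proposition~\ref{prop2.8}(1) produces an infinite path in $Z(w)\setminus Z(\alpha)$, so $VV^*<V^*V$ strictly; hence $s_w$ is infinite, and then so is $s_v$.

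For (2), my plan is to embed $C^*(E)$ into $\Og$. The universal property of $C^*(E)$ provides a $*$-homomorphism $\iota:C^*(E)\to\Og$ with $\iota(p_v)=s_v$ and $\iota(s_e)=s_e$. The gauge action $\beta_z$ on $\Og$ determined by $\beta_z(s_e)=zs_e$, $\beta_z(p_v)=p_v$, $\beta_z(u_g)=u_g$ is compatible with the defining relations, and $\iota$ intertwines it with the standard gauge action on $C^*(E)$. Since $\iota(p_v)=s_v\ne 0$ for every $v$, the gauge-invariant uniqueness theorem forces $\iota$ to be injective. If $C^*(E)$ is purely infinite, each corner $p_vC^*(E)p_v$ is a nonzero hereditary subalgebra and therefore contains an infinite projection $q_v\le p_v$; its image $\iota(q_v)$ is an infinite projection of $\Og$ dominated by $s_v$, so $s_v$ is infinite by the opening fact.

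The step I expect to be hardest is the strict inequality $s_\alpha s_\alpha^*<s_w$ in part~(1). This is precisely where the ``entry'' condition of Proposition~\ref{prop2.8}(1) has to be unpacked: I would construct the branching infinite path explicitly by following $\alpha$ up to the entry vertex, leaving via the entry edge, and then extending using row-finiteness together with the source-freeness of $E$. The remaining ingredients---the $u_g$-computations, Lemma~\ref{lem3.3}, and the gauge-invariant uniqueness theorem---should be routine.
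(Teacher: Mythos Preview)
Your proposal is correct and follows essentially the same route as the paper: both parts reduce via Theorem~\ref{thm3.1} to showing each $s_v$ is infinite, with (1) using the entry to the $G$-circuit to witness infiniteness at $w=r(\alpha)$ and then Lemma~\ref{lem3.3}(2) to propagate to $v$, and (2) transporting infiniteness through the canonical embedding $C^*(E)\hookrightarrow\Og$. The only cosmetic difference is that the paper writes the chain $s_{r(\alpha)}\ge s_\alpha s_\alpha^*+s_\gamma s_\gamma^*>s_\alpha s_\alpha^*\sim s_{d(\alpha)}\sim s_{r(\alpha)}$ via Lemma~\ref{lem3.3}(1), whereas you package the same data into the single partial isometry $V=s_\alpha u_g$.
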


\begin{proof}
(1). In view of Theorem \ref{thm3.1}, it suffices to prove that each $s_v$ is infinite in $\Og$. So, fix some $v\in E^0$. By hypothesis, there is a $G$-circuit $\alpha$ connecting to $v$ by a $G$-path.

We first show that $s_{r(\alpha)}$ is infinite. For, let $\gamma$ be an entry for $\alpha$ by assumption. Since each of $\alpha$ nor $\gamma$ is not a subpath of the other, one may compute that $s_\alpha s_\alpha^*$ and $s_\gamma s_\gamma^*$ are orthogonal. Hence, the Cuntz-Krieger relations imply that
$$s_{r(\alpha)}\geq s_\alpha s_\alpha^*+s_\gamma s_\gamma^*> s_\alpha s_\alpha^*\sim s_\alpha^* s_\alpha=s_{d(\alpha)}.$$
If $d(\alpha)=gr(\alpha)$, then $s_{d(\alpha)}\sim s_{r(\alpha)}$ by Lemma \ref{lem3.3}(1), and whence $s_{r(\alpha)}$ is infinite in $\Og$ as claimed.

Now, because there is a $G$-path from $r(\alpha)$ to $v$, we have $s_v\succsim s_{r(\alpha)}$ by Lemma \ref{lem3.3}(2), and therefore $s_v$ is infinite as well. As $v\in E^0$ was arbitrary, Theorem \ref{thm3.1} follows the result.

(2). If $C^*(E)$ is purely infinite, then each $s_v$ is infinite in $C^*(E)$, and so is in $\Og$ as well. Now apply Theorem \ref{thm3.1}.
\end{proof}

\begin{rem}
If $v\in E^0$ receives a $G$-path from a $G$-circuit  with an entry but not a path from a circuit, then $s_v$ is infinite in $\Og$ while not in $C^*(E)$. Therefore, the converse of Proposition \ref{prop3.4}(2) does not necessarily hold.
\end{rem}

In the simple case we conclude the following.

\begin{cor}\label{cor3.6}
Let $(G,E,\varphi)$ be a pseudo free self-similar graph over an amenable group $G$. Suppose that $\Og$ is simple. If $E$ contains a $G$-circuit, then $\Og$ is purely infinite.
\end{cor}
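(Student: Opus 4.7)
The plan is to reduce the claim to Proposition \ref{prop3.4}(1), which tells us that if every $v\in E^0$ receives a $G$-path from a $G$-circuit, then $\Og$ is purely infinite. Under the assumption that $\Og$ is simple and $E$ contains at least one $G$-circuit, I only have to verify two things: the hypotheses needed to apply Proposition \ref{prop3.4}(1) (namely, pseudo freeness plus the two conditions of Proposition \ref{prop2.8}), and that every vertex of $E$ receives a $G$-path from the given $G$-circuit.

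First I would translate simplicity of $\Og$ into groupoid-theoretic information. Since $G$ is amenable and $(G,E,\varphi)$ is pseudo free, Proposition \ref{prop2.7} gives $\Og \cong C^*(\g) \cong C^*_r(\g)$, with $\g$ an amenable Hausdorff ample groupoid. It is standard for reduced $C^*$-algebras of Hausdorff ample groupoids that simplicity forces $\g$ to be both effective and minimal. The effectiveness, via Proposition \ref{prop2.8}, supplies conditions (1) and (2) that Proposition \ref{prop3.4}(1) requires.

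Next I would convert minimality of $\g$ into a graph-theoretic condition. By the Exel--Pardo characterization \cite[Theorem 13.6]{exe17} (which extends verbatim to our countable row-finite source-free setting, as remarked in the paper), minimality of $\g$ is equivalent to the statement that, for every $v,w\in E^0$, $v$ receives a $G$-path from $w$. In particular, picking any vertex $w$ lying on the assumed $G$-circuit $\alpha$, any $v\in E^0$ receives a $G$-path from $w$ and hence from the $G$-circuit $\alpha$. Proposition \ref{prop3.4}(1) then yields pure infiniteness of $\Og$.

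The main obstacle is a bookkeeping one: confirming that the two imported ingredients — the implication ``simple reduced $C^*$-algebra $\Rightarrow$ effective and minimal groupoid'' for Hausdorff amenable ample groupoids, and the graph-theoretic description of minimality from \cite[Theorem 13.6]{exe17} — both carry over to our countable row-finite source-free setting. Both are covered by the ``same-way'' extension of \cite{exe17} that the author has invoked earlier, so there is no substantive difficulty beyond citing them carefully.
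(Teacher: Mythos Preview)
Your overall strategy---reduce to Proposition \ref{prop3.4}(1) after extracting effectiveness and minimality of $\g$ from simplicity---is sound, but there is a genuine gap in the step where you translate minimality into a graph condition. The characterization you quote, ``for every $v,w\in E^0$, $v$ receives a $G$-path from $w$'', is \emph{strictly stronger} than what \cite[Theorem~13.6]{exe17} (or \cite[Theorem~4.5]{exe18}) actually gives. Minimality of $\g$ is equivalent to \emph{weak $G$-transitivity}: for every $v\in E^0$ and every infinite path $x\in E^\infty$, the vertex $v$ receives a $G$-path from $x(n,n)$ for \emph{some} $n\ge 0$. Already for ordinary graphs with trivial $G$ this is weaker than your condition: the one-sided infinite line $v_0\to v_1\to v_2\to\cdots$ is cofinal, yet $v_1$ does not receive a path from $v_0$.

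To close the gap you must produce a specific infinite path to which weak $G$-transitivity can be applied, and this is exactly what the paper does: from a $G$-circuit $(g,\alpha)$ one forms $\alpha^\infty:=\alpha(g\alpha)(g^2\alpha)\cdots\in E^\infty$, and weak $G$-transitivity then gives, for each $v$, a $G$-path from some $r(g^n\alpha)=g^n r(\alpha)$ to $v$. Since $g^n r(\alpha)\approx r(\alpha)$, this yields a $G$-path from the base of the $G$-circuit to $v$, after which either Proposition \ref{prop3.4}(1) or the direct estimate $s_v\succsim s_{r(g^n\alpha)}\sim s_{r(\alpha)}$ finishes the argument. In other words, once the minimality step is repaired, your route via Proposition \ref{prop3.4}(1) and the paper's route via Theorem \ref{thm3.1} are essentially the same proof, differing only in packaging.
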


\begin{proof}
Note that the simplicity of $\Og$ gives conditions (1) and (2) in Proposition \ref{prop2.8} \cite[Theorem 4.5]{exe18}. So, by Theorem \ref{thm3.1}, it suffices to show that $s_v$ is infinite for each $v\in E^0$.

Let $(g,\alpha)$ be a $G$-circuit in $E$. By \cite[Theorem 16.1]{exe17}, $(g,\alpha)$ has an entry, hence $s_{r(\alpha)}$ is infinite as seen in the proof of Proposition \ref{prop3.4}(1).

Fix an arbitrary $v\in E^0$. We may form the infinite path $\alpha^\infty=\alpha(g\alpha)(g^2 \alpha)\cdots$, which is well-defined because
$$d(g^n\alpha)=g^n d(\alpha)=g^n g r(\alpha)=r(g^{n+1}\alpha).$$
Since $E$ is also weakly $G$-transitive by \cite[Theorem 4.5]{exe18}, there is a $G$-path from $r(g^n\alpha)$ to $v$ for sufficiently large $n$. Note that as $r(g^n\alpha)=g^n r(\alpha)$, $s_{r(g^n \alpha)}=s_{g^n r(\alpha)}$ is infinite by Lemma \ref{lem3.3}(1). Also, Lemma \ref{lem3.3}(2) implies that $s_v\succsim s_{r(g^n \alpha)}\sim s_{r(\alpha)}$, and consequently $s_v$ is infinite too. As $v\in E^0$ was arbitrary, Theorem \ref{thm3.1} concludes that $\Og$ is purely infinite.
\end{proof}

\begin{rem}
The converse of above corollary will be proved in Theorem \ref{thm4.9} (1) $\Longleftrightarrow$ (6).
\end{rem}

The following result gives necessary and sufficient criteria for the purely infinite simple $C^*$-algebras by the monoiod of equivalent projections. It is new even for the ordinary graph $C^*$-algebras. Before that we recall the definition of $K_0$-group of a unital $C^*$-algebra and establish some notations. Let $A$ be a unital $C^*$-algebra and write by $\mathcal{P}(A)$ the collection of all projections in $M_\infty(A)=\bigcup_{n\geq 1}M_n(A)$. We say that two projections $p\in M_m(A)$ and $q\in M_n(A)$ are equivalent, denoted by $p\sim q$, if
$$\exists~ v\in M_{m,n}(A) \hspace{2mm} \mathrm{such ~ that} \hspace{2mm} p=v^*v \hspace{2mm} \mathrm{and} \hspace{2mm} q=v^*v.$$
Note that, if $m\leq n$, then $p\sim q$ if and only if $p\oplus 0_{n-m}$ is Murray-von Neumann equivalent to $q$ in $M_n(A)$, where $x\oplus y:=\mathrm{diag}(x,y)$. Define $\mathcal{D}(A):=\mathcal{P}(A)/\sim=\{[p]:p\in\mathcal{P}(A)\}$, which is an abelian monoid with the operation $[p]+[q]:=[p\oplus q]$. Then $K_0(A)$ is {\it the Grothendieck group} of $\mathcal{D}(A)$ endowed with a universal Grothendieck map $\phi:\mathcal{D}(A)\rightarrow K_0(A)$. The image of $\mathcal{D}(A)$ under $\phi$ is denoted by $K_0(A)^+$. It is known that when $\mathcal{D}(A)\setminus \{0\}$ is a group, then $K_0(A)=\mathcal{D}(A)\setminus\{0\}$.

\begin{thm}\label{thm3.7}
\begin{itemize}
  \item[(1)] Let $E$ be an arbitrary directed graph (non necessarily row-finite, source-free, or even countable) with $|E^0|<\infty$. Then $C^*(E)$ is purely infinite and simple if and only if $\mathcal{D}(C^*(E))\setminus \{0\}$ is a group (or equivalently, $\mathcal{D}(C^*(E))\setminus \{0\}=K_0(C^*(E))$).
  \item[(2)] Let $(G,E,\varphi)$ be a pseudo free self-similar graph over an amenable group $G$. Suppose also that $|E^0|<\infty$ and conditions (1) and (2) of Proposition \ref{prop2.8} hold. Then $\Og$ is purely infinite simple if and only if $\mathcal{D}(\Og)$ is a group.
\end{itemize}
\end{thm}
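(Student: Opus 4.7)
The plan is to split each biconditional, with the ``only if'' direction following from a classical theorem and the ``if'' requiring explicit analysis of the monoid. For the ``only if'' in both (1) and (2): whenever a unital $C^*$-algebra $A$ is purely infinite and simple, Cuntz showed that every nonzero projection in $M_\infty(A)$ is properly infinite and that the Grothendieck map $\phi:\mathcal{D}(A)\setminus\{0\}\to K_0(A)$ is a bijection transporting the monoid operation to the group operation; in particular $\mathcal{D}(A)\setminus\{0\}$ is a group. Since $|E^0|<\infty$ (resp.\ $|\Lambda^0|<\infty$) makes the algebra unital, this applies directly.

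For the ``if'' direction of (1), suppose $\mathcal{D}(C^*(E))\setminus\{0\}$ is a group with identity $e$. Choose any projection $P\in M_\infty(C^*(E))$ with $[P]=e$; the relation $e+e=e$ yields $P\oplus P\sim P$, so $P$ is properly infinite. Moreover, for any nonzero projection $q$, the identity $e+[q]=[q]$ gives $P\oplus q\sim q$, and because $P\oplus q$ contains $q\oplus 0\sim q$ as a proper subprojection, $q$ is Murray--von Neumann infinite. To deduce simplicity I exploit the algebraic preorder on $\mathcal{D}(C^*(E))$: given $x,y\in\mathcal{D}(C^*(E))\setminus\{0\}$, the group inverse supplies $z\in\mathcal{D}(C^*(E))\setminus\{0\}$ with $x+z=y$ (or $z=0$ when $x=y$), so any two nonzero classes are mutually $\leq$-comparable. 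Hence any order ideal containing a nonzero element is the whole monoid, and the only order ideals of $\mathcal{D}(C^*(E))$ are $\{0\}$ and $\mathcal{D}(C^*(E))$. Via the standard correspondence between closed two-sided ideals of $C^*(E)$ and order ideals of the graph monoid $M_E\cong\mathcal{D}(C^*(E))$, $C^*(E)$ is simple. Finally, a simple $C^*$-algebra containing a properly infinite projection is purely infinite, because by simplicity any nonzero hereditary subalgebra contains a projection equivalent to a nonzero subprojection of $P$.

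Part (2) is proved on the same template, with pure infiniteness obtained via Theorem~\ref{thm3.1} in place of the hereditary-subalgebra argument. As in (1), the group hypothesis forces every nonzero projection in $M_\infty(\Og)$ to be infinite; in particular every $s_v$ is infinite, so Theorem~\ref{thm3.1} yields that $\Og$ is purely infinite. For simplicity, the order-ideal trivialization is verbatim as in (1), and one invokes the analogous ideal correspondence for $\Og$: closed two-sided ideals are controlled by $G$-saturated hereditary subsets of $E^0$, equivalently by $\g$-invariant open subsets of $E^\infty$. Combining simplicity with pure infiniteness yields the claim.

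The main obstacle I anticipate is making the ideal/order-ideal correspondence fully rigorous, which is almost certainly the gap acknowledged in the introduction. In (1), for non--row-finite graphs one must separately rule out ideals arising from breaking vertices, which are not captured by $M_E$ alone; a resolution might be to show that the group hypothesis forces condition~(K) on $E$, after which every ideal is gauge-invariant and hence determined by an order ideal. In (2) the analogous subtlety is to match $\g$-invariant open subsets of $E^\infty$ with order ideals of $\mathcal{D}(\Og)$ in a manner compatible with the self-similar action, and this is where the argument would need the most care, possibly via a direct groupoid-theoretic simplicity argument leveraging the effectivity already supplied by Proposition~\ref{prop2.8}.
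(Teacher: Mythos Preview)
Your ``only if'' direction and the step showing every nonzero projection is infinite match the paper. The divergence, and the real gap, is in how you obtain simplicity and pure infiniteness from there.

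For simplicity you invoke a correspondence between closed ideals and order ideals of $\mathcal{D}(C^*(E))\cong M_E$, and you correctly flag that this only covers gauge-invariant ideals unless you first establish Condition~(K). The paper sidesteps this entirely. From ``every projection is infinite'' it first deduces Condition~(L): a circuit without entries would produce an ideal Morita equivalent to $C(\mathbb{T})$, hence a finite projection. Condition~(L) then guarantees that every nonzero ideal $I$ contains a projection $p$. Now comes the key move you are missing: using the group structure one writes $[p]=[1]+[q]$ (take $[q]$ to be the inverse of $[1-p]$), so $p\sim 1\oplus q$ in $M_\infty$, and unraveling this gives $x_1$ with $x_1^*px_1=1$, forcing $1\in I$. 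This argument never needs the ideal lattice to match the monoid's order ideals; it only needs one projection in each ideal, which Condition~(L) (respectively effectivity of $\g$, via \cite{exe10}) supplies.

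Your pure infiniteness argument in (1) has a genuine error, not just a gap: the assertion ``a simple $C^*$-algebra containing a properly infinite projection is purely infinite'' is false (R{\o}rdam's example of a simple nuclear $C^*$-algebra with both a finite and an infinite projection), and your justification ``by simplicity any nonzero hereditary subalgebra contains a projection'' is unsupported --- simplicity does not produce projections in hereditary subalgebras. The paper again uses Condition~(L): it guarantees a projection in any nonzero hereditary subalgebra of $C^*(E)$, and that projection is then infinite by the earlier step. Your route for (2) via Theorem~\ref{thm3.1} is fine for pure infiniteness, but you still need the paper's direct simplicity argument (effectivity $\Rightarrow$ every ideal contains a projection $\Rightarrow$ $1\in I$) rather than the monoid/ideal correspondence.
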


\begin{proof}
Note that the ``only if" implications hold for every unital purely infinite simple $C^*$-algebra. Indeed, if $A$ is a purely infinite simple $C^*$-algebra, then nonzero projections of $A$ are all infinite. Thus, combining Proposition 1.5 and Theorem 1.4 of \cite{cun81} implies that $\mathcal{D}(A)\setminus \{0\}$ is a group ($=K_0(A)$).

So it is enough to prove the ``if" parts. We first show that every projection $p$ in $A$ is infinite for any unital $C^*$-algebra $A$ with $\mathcal{D}(A)\setminus \{0\}$ a group. Indeed, if $[f]$ is the identity of $\mathcal{D}(A)\setminus\{0\}$, then
$$[p]=[p]+[f]=[p\oplus f],$$
thus we have
$$p\sim p\oplus 0<p\oplus f\sim p,$$
where $0$ is a zero matrix in $M_\infty(A)$. Therefore, $p$ is an infinite projection in $A$, as claimed.

In the case of statement (1), this follows that $E$ satisfies Condition (L). In fact if there exists a circuit in $E$ with no entries, then $C^*(E)$ contains an ideal Morita equivalent to $C(\mathbb{T})$, hence it has a finite projection. Recall that by Condition (L) every ideal of $C^*(E)$ has a (vertex) projection. Now take a nonzero ideal $I$ of $C^*(E)$ and some projection $0\neq p\in I$. As $|E^0|<\infty$, write $1:=\sum_{v\in E^0}s_v$ the unit of $C^*(E)$. Then $[p]+[1-p]=[1]$ and we have
$$[p]=[1]+[q]=[1\oplus q],$$
where $[q]$ is the inverse of $[1-p]$ in $\mathcal{D}(C^*(E))\setminus \{0\}$. Therefore, $p\sim 1\oplus q$ which says that there is $x=[x_1 x_2 x_3 \ldots]\in M_{1,\infty}(\Og)$ such that $x^* p x=1\oplus q$. In particular, $1=x_1^* p x_1\in I$, concluding $I=C^*(E)$. Therefore $C^*(E)$ is simple.

For the pure infiniteness, let $B$ be a nonzero hereditary $C^*$-subalgebra of $C^*(E)$. Again, Condition (L) gives a nonzero projection $p$ in $B$. If $[f]$ is the identity of $\mathcal{D}(C^*(E))\setminus\{0\}$, then
$$[p]=[p]+[f]=[p\oplus f],$$
and we have
$$p\sim p\oplus 0<p\oplus f\sim p$$
where $0$ is a zero matrix in $M_\infty(\Og)$, and consequently $p$ is infinite. Therefore, $C^*(E)$ is purely infinite.

For statement (2), note that $\g$ is effective by Proposition \ref{prop2.8}, and $\Og\cong C^*_r(\g)$ by Proposition \ref{prop2.7}. This implies that every ideal of $\Og$ contains a projection (see \cite[Theorem4.4]{exe10} for example). Now we may follow the proof of statement (1) to obtain the result.
\end{proof}


\section{Stable finiteness and a dichotomy}

In this section, we associate a special graph $\e$ to any self-similar graph $(G,E,\varphi)$. We show that if the graph $C^*$-algebra $C^*(\e)$ is either simple, purely infinite, or stable finite then so is $\Og$ respectively. Then we will conclude a dichotomy for simple self-similar graph $C^*$-algebras.

\begin{defn}
Let $\mathbb{K}$ denote the $C^*$-algebra of compact operators on a separable, infinite dimensional Hilbert space. A (simple) $C^*$-algebra $A$ is called {\it stably finite} if $A\otimes \mathbb{K}$ contains no infinite projections.
\end{defn}

Fix a self-similar graph $(G,E,\varphi)$. In the following we define a graph $\widetilde{E}$ associated to $(G,E,\varphi)$. Define $\approx$ on $E^*=\bigsqcup_{n=0}^\infty E^n$ by
$$\alpha\approx \beta \hspace{5mm} \Longleftrightarrow \hspace{5mm} \exists g\in G ~~ \mathrm{such ~ that} ~~ \beta=g\alpha,$$
which is an equivalent relation on each $E^n$ (and so on $E^*$). The vertex set of $\e$ is $\e^0:=E^0/\approx$ the collection of vertex classes. In each class $[v] \in \e^0$ pick exactly one vertex up and collect them in the set $\Omega$. Hence, $\e^0=\{[v]:v\in \Omega\}$, and we have $[v]\ne [w]$ for $v\ne w\in \Omega$. For every $v\in \Omega$ and $e\in r^{-1}(v)$ draw an edge $\tilde{e}$ from $[d(e)]$ to $[v]$. Hence we obtain the graph $\e$ so that
\begin{align*}
\e^0&:=\{[v]:v\in \Omega\}, ~\mathrm{and}\\
\e^1&:=\bigcup_{v\in\Omega}\widetilde{r^{-1}(v)}=\bigcup_{v\in\Omega}\{\widetilde{e}:r(e)=v\},
\end{align*}
with the range $\widetilde{r}(\widetilde{e})=[r(e)]$ and domain $\widetilde{d}(\widetilde{e})=[d(e)]$ for every $\widetilde{e}\in \e^1$.

\begin{ex}\label{ex4.2}
For $n\geq1$, let $\mathbb{Z}_{\mathrm{mod}n}$ be the additive group $\{1,2,\ldots, n\}$. Let $(\mathbb{Z}_{\mathrm{mod}n},E,\varphi)$ be a triple with the cyclic graph $E$

\begin{center}
\begin{tikzpicture}[thick]
\node (v) at (0,0) {$v$};
\node (w1) at (0,2) {$w_1$};
\node (w2) at (1.7,1) {$w_2$};
\node (w3) at (1.7,-1) {$w_3$};
\node (wn) at (-1.7,1) {$w_n$};

\node () at (.8,1.3) {$f_1$};
\node () at (-.8,1.4) {$f_n$};
\node () at (-1.1,2) {$g_n$};

\node () at (1.45,-1.35) {$\cdot$};
\node () at (1.35,-1.5) {$\cdot$};
\node () at (1.2,-1.6) {$\cdot$};

\node () at (-1.93,.6) {$\cdot$};
\node () at (-1.99,.4) {$\cdot$};
\node () at (-2.01,.15) {$\cdot$};

\path[<-] (v) edge node[above=-15pt] {$e_1\hspace{4mm}$} (w1);
\path[<-] (v) edge node[above=-5pt] {$\hspace{-4mm}e_2$} (w2);
\path[<-] (v) edge node[above=-5pt] {$$} (w3);
\path[<-] (v) edge node[above=-15pt] {$e_n$} (wn);
\path[<-] (w1) edge[bend right] node[right=-5pt] {$$} (w2);
\path[<-] (w2) edge[bend right] node[right=1pt] {$g_1$} (w1);
\path[<-] (w2) edge[bend right] node[right=-4pt] {$f_2$} (w3);
\path[<-] (w3) edge[bend right] node[right=1pt] {$g_2$} (w2);
\path[<-] (wn) edge[bend right] node[right=-5pt] {$$} (w1);
\path[<-] (w1) edge[bend right] node[left=0pt] {$$} (wn);

\end{tikzpicture}

\end{center}
and the action $\mathbb{Z}_{\mathrm{mod}n}\curvearrowright E$ defined by
$$kv:=v \hspace{5mm} \mathrm{and} \hspace{5mm} k\alpha_i:=\alpha_{k+i} \hspace{10mm} (1\leq k,i \leq n), $$
for every $\alpha_i\in \{w_i, e_i, f_i, g_i\}$. Since $w_i\approx w_j$, for any $1\leq i,j\leq n$, we may select $w_1$ of the class $[w_1]=\{w_1,\ldots, w_n\}$. As $r^{-1}(v)=\{e_1,\ldots,e_n\}$ and $r^{-1}(w_1)=\{f_1,g_n\}$, then the graph $\e$ would be

\begin{center}
\begin{tikzpicture}[thick]
\node (v) at (0,0) {$[v]$};
\node (w1) at (0,2) {$[w_1]$};

\node () at (-.2,1) {$\cdots$};

\draw[->] (w1) ..controls (.7,1.3) and (.7,.7) .. node[right=0pt] {$\widetilde{e_1}$} (v);
\draw[->] (w1) ..controls (.2,1.3) and (.2,.7) .. node[right=-3pt] {$\widetilde{e_2}$} (v);
\draw[->] (w1) ..controls (-.7,1.3) and (-.7,.7) .. node[left=1pt] {$\widetilde{e_n}$} (v);

\draw[<-] (w1) ..controls (2,1) and (2,3) .. node[right=1pt] {$\widetilde{f_1}$} (w1);
\draw[->] (w1) ..controls  (-2,1) and (-2,3) .. node[left=0pt] {$\widetilde{g_n}$} (w1);

\end{tikzpicture}
\end{center}

\end{ex}

\begin{lem}\label{lem4.3}
Let $(G,E,\varphi)$ be a self-similar graph, and consider an associated graph $\e$ as above. Then
\begin{itemize}
  \item[(1)] If $E$ is row-finite, then so is $\e$.
  \item[(2)] For each finite path $\widetilde{\alpha}=\widetilde{\alpha}_1\ldots \widetilde{\alpha}_n\in \e^n$, there is a path $\gamma=\gamma_1\ldots \gamma_n$ in $E^n$ such that $\gamma\approx \alpha_i$ for $1\leq i\leq n$. Conversely, if $\gamma=\gamma_1\ldots \gamma_n\in E^n$, then there exists $\widetilde{\alpha}=\widetilde{\alpha}_1\ldots \widetilde{\alpha}_n\in \e^n$ such that $\gamma\approx \alpha_i$ for $1\leq i\leq n$.
  \item[(3)] If $\widetilde{\alpha}\in \e^n$ and $\gamma\in E^n$ are two paths as in statement (2), then $\widetilde{\alpha}$ is a circuit in $\e$ if and only if $\gamma$ is a $G$-circuit in $E$. Moreover, $\widetilde{\alpha}$ has an entry if and only if $\gamma$ does.
\end{itemize}
\end{lem}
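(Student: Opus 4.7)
The proof reduces to carefully unwinding the construction of $\widetilde{E}$ and exploiting the natural bijection between $\widetilde{E}^1$ and $\{e\in E^1 : r(e)\in \Omega\}$, combined with the $G$-orbit structure on edges.

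For (1), each vertex $[v]\in \widetilde{E}^0$ has a unique representative $v\in \Omega$, and by construction the set of edges with range $[v]$ is
$$\widetilde{r}^{-1}([v]) = \{\widetilde{e} : e\in r^{-1}(v)\},$$
which is in bijection with $r^{-1}(v)$; thus row-finiteness of $E$ transfers immediately to $\widetilde{E}$.

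For (2), suppose $\widetilde{\alpha}=\widetilde{\alpha}_1\cdots\widetilde{\alpha}_n\in\widetilde{E}^n$. The composability conditions $\widetilde{d}(\widetilde{\alpha}_i)=\widetilde{r}(\widetilde{\alpha}_{i+1})$ translate to $[d(\alpha_i)]=[r(\alpha_{i+1})]$, so there exist $g_i\in G$ with $d(\alpha_i)=g_i r(\alpha_{i+1})$. Setting $\gamma_1:=\alpha_1$ and inductively $\gamma_{i+1}:=(g_1 g_2\cdots g_i)\alpha_{i+1}$, the $G$-equivariance of $r$ and $d$ (from the remark preceding Section~2.4) gives $r(\gamma_{i+1})=g_1\cdots g_i r(\alpha_{i+1})=g_1\cdots g_{i-1}d(\alpha_i)=d(\gamma_i)$, so $\gamma\in E^n$ with $\gamma_i\approx\alpha_i$. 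Conversely, given $\gamma\in E^n$, each $\gamma_i$ is $G$-equivalent to a unique $\alpha_i\in E^1$ with $r(\alpha_i)\in \Omega$; the identity $d(\gamma_i)=r(\gamma_{i+1})$ forces $d(\alpha_i)\approx r(\alpha_{i+1})$, which is exactly composability of $\widetilde{\alpha}_i\widetilde{\alpha}_{i+1}$ in $\widetilde{E}$.

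For (3), retain the notation of (2) and write $\gamma_i=h_i\alpha_i$ with $h_1=1_G$ and $h_{i+1}=h_i g_i$. Then $r(\gamma_1)=r(\alpha_1)$ and $d(\gamma_n)=h_n d(\alpha_n)$. The circuit condition for $\widetilde{\alpha}$ is $[r(\alpha_1)]=[d(\alpha_n)]$, i.e., there exists $k\in G$ with $d(\alpha_n)=k r(\alpha_1)$, which rearranges to $d(\gamma_n)=(h_n k)r(\gamma_1)$, precisely the $G$-circuit condition for $\gamma$. The entry assertion follows similarly: an entry for $\widetilde{\alpha}$ in $\widetilde{E}$ at some $\widetilde{r}(\widetilde{\alpha}_i)=[r(\alpha_i)]$ is an edge $\widetilde{e}\ne \widetilde{\alpha}_i$, which under the bijection corresponds to $e\in E^1$ with $r(e)=r(\alpha_i)$ and $e\ne \alpha_i$; translating by $h_i$ yields the edge $h_i e$ at $r(\gamma_i)$ which is not $G$-equivalent to $\gamma_i$ (since their $\Omega$-representatives $e$ and $\alpha_i$ differ), giving an entry for the $G$-circuit $\gamma$, and the reverse pullback is analogous.

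The main technical care is needed in step (3) when matching the two notions of entry: one has to verify that the bijection $e\leftrightarrow\widetilde{e}$ together with the $G$-translations $h_i$ does not identify distinct edges, which is ensured precisely by the fact that $\Omega$ contains exactly one representative per $G$-class, so no $G$-orbits collapse and the entry dictionary is faithful in both directions.
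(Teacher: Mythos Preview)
Your argument is essentially correct and follows the same route as the paper: parts (1) and (2) are virtually identical (same inductive translation $\gamma_{i+1}=g_1\cdots g_i\,\alpha_{i+1}$), and the circuit equivalence in (3) is the same chain of $\approx$-equivalences. A minor slip: in the converse of (2) the edge $\alpha_i$ with $r(\alpha_i)\in\Omega$ need not be \emph{unique} in the $G$-orbit of $\gamma_i$ (the stabiliser of the range vertex may be nontrivial on edges), but existence is all you need.

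For the entry claim in (3) there is a small difference worth noting. The paper argues purely by cardinality: since $r(\alpha_i)\in\Omega$, the map $e\mapsto\widetilde{e}$ is a bijection $r^{-1}(r(\alpha_i))\to\widetilde{r}^{-1}([r(\alpha_i)])$, and since $G$ acts by graph automorphisms, $|r^{-1}(r(\gamma_i))|=|r^{-1}(r(\alpha_i))|$; hence one side exceeds $1$ iff the other does. Your explicit translation by $h_i$ also works, but your justification that $h_ie$ is ``not $G$-equivalent to $\gamma_i$ (since their $\Omega$-representatives $e$ and $\alpha_i$ differ)'' is both unnecessary and not quite right: distinct edges $e\neq\alpha_i$ with the same range in $\Omega$ can still lie in the same $G$-orbit, and in any case an entry for the path $\gamma$ only requires $h_ie\neq\gamma_i$, which is immediate from $e\neq\alpha_i$ and the injectivity of the $h_i$-action. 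Dropping the $G$-equivalence remark (and the final paragraph about orbits collapsing) makes your argument clean and equivalent to the paper's.
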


\begin{proof}
Statement (1) is clear by the definition of $\e$. For (2), let first $\widetilde{\alpha}=\widetilde{\alpha}_1\ldots \widetilde{\alpha}_n\in \e^n$ be a path in $\e$. Then, for each $1\leq i<n$, we have
$$[d(\alpha_i)]=\widetilde{d}(\widetilde{\alpha}_i)=\widetilde{r}(\widetilde{\alpha}_{i+1})=[r(\alpha_{i+1})],$$
and so there exists $g_i\in G$ such that $d(\alpha_i)=g_i r(\alpha_{i+1})$. Now set $\gamma_1:=\alpha_1$ and $\gamma_i:=g_1\ldots g_{i-1}\alpha_i$ for every $2\leq i\leq n$. Then
$$d(\gamma_i)=d(g_1\ldots g_{i-1}\alpha_i)=g_1\ldots g_{i-1}d(\alpha_i)=g_1\ldots g_{i-1} g_i r(\alpha_{i+1})=r(\gamma_{i+1}),$$
and hence $\gamma=\gamma_1\ldots \gamma_n$ is a desired path in $E$.

Conversely, let $\gamma=\gamma_1\ldots \gamma_n$ be a finite path in $E^n$. For each $1\leq i\leq n$, there is $v_i\in \Omega$ such that $v_i=g_i r(\gamma_i)$ for some $g_i\in G$. Hence, we have $\widetilde{\alpha}=(\widetilde{g_1 \gamma_1})\ldots (\widetilde{g_n \gamma_n})\in \e$ with $\alpha \approx \gamma$.

For statement (3), given $\widetilde{\alpha}$ and $\gamma$ as in part (2), we have
\begin{align*}
\widetilde{\alpha} ~ \mathrm{is ~ a ~ circuit ~ in ~ } E &\Longleftrightarrow ~ ~ [d(\alpha_n)]=[r(\alpha_1)]\\
&\Longleftrightarrow ~ ~ d(\alpha_n)\approx r(\alpha_1)\\
&\Longleftrightarrow ~ ~  d(\gamma_n)\approx d(\alpha_n)\approx r(\alpha_1) \approx r(\gamma_1)\\
&\Longleftrightarrow ~ ~ \gamma ~ \mathrm{is ~ a ~} G\mathrm{-circuit}.
\end{align*}
Moreover, since $|r^{-1}(r(\gamma_i))|=|\widetilde{r}^{-1}(\widetilde{r}(\widetilde{\alpha}_i))|$ for each $1\leq i\leq n$, we have
\begin{align*}
\gamma \mathrm{ ~ has ~ an ~ entry} \hspace{2mm} &\Longleftrightarrow \hspace{2mm} |r^{-1}(r(\gamma_i))|>1 ~~ \mathrm{for ~ some~ } 1\leq i\leq n\\
&\Longleftrightarrow \hspace{2mm} |\widetilde{r}^{-1}(\widetilde{r}(\alpha_i))|>1 ~~ \mathrm{for ~ some~ } 1\leq i\leq n\\
&\Longleftrightarrow \hspace{2mm} \widetilde{\alpha} \mathrm{~has ~ an ~ entry ~ in ~ } \widetilde{E} .
\end{align*}
\end{proof}

\begin{defn}
Let $(G,E,\varphi)$ be a self-similar graph. Following \cite[Definition 3.4]{exe17}, we say that $E$ is {\it weakly $G$-transitive} if for every $v\in E^0$ and $x\in E^\infty$, there exists a path $\alpha$ such that $d(\alpha)=x(n,n)$ for some $n\geq 0$ and $r(\alpha)=g v$ for some $g\in G$. If we have an ordinary graph $E$ (with the trivial group action), we say simply that $E$ is {\it weakly transitive}. Note that the weakly transitive is called {\it cofinal} in \cite{rae05}.
\end{defn}

\begin{lem}\label{lem4.5}
Let $(G,E,\varphi)$ be a self-similar graph, and associate a graph $\e$ as above. Then
\begin{itemize}
  \item[(1)] Every $G$-circuit in $E$ has an entry if and only if every circuit in $\e$ does.
  \item[(2)] $E$ is weakly $G$-transitive if and only if $\e$ is weakly transitive.
\end{itemize}
\end{lem}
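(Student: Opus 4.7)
The plan is to get both statements essentially as corollaries of Lemma \ref{lem4.3}. For statement (1), given a circuit $\widetilde{\alpha}$ in $\widetilde{E}$, Lemma \ref{lem4.3}(2) produces a finite path $\gamma \in E^*$ with $\gamma_i \approx \alpha_i$ for each $i$, and Lemma \ref{lem4.3}(3) then tells me that $\gamma$ is a $G$-circuit which has an entry precisely when $\widetilde{\alpha}$ does. The converse direction runs identically: starting from a $G$-circuit $\gamma$ in $E$, use Lemma \ref{lem4.3}(2) to produce the matching $\widetilde{\alpha}$ and invoke Lemma \ref{lem4.3}(3) again. So statement (1) reduces to bookkeeping with the correspondence already set up.

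For statement (2), the plan is first to observe that the constructions of Lemma \ref{lem4.3}(2) extend verbatim to infinite paths. Given $\widetilde{x} = \widetilde{x}_1 \widetilde{x}_2 \ldots \in \widetilde{E}^\infty$, choose $g_i \in G$ inductively with $d(x_i) = g_i r(x_{i+1})$ (where $x_i$ is the unique underlying edge in $E$ with $r(x_i)\in \Omega$) and set $\gamma_i := g_1\cdots g_{i-1} x_i$; this yields an infinite path $\gamma \in E^\infty$ with $\gamma_i \approx \widetilde{x}_i$. Conversely, given $x \in E^\infty$, for each $i$ pick $h_i \in G$ with $h_i r(x_i)\in\Omega$ and set $\widetilde{x} := (\widetilde{h_1 x_1})(\widetilde{h_2 x_2})\cdots$; the vertex-class computation $[d(h_i x_i)] = [d(x_i)] = [r(x_{i+1})] = [r(h_{i+1} x_{i+1})]$ shows this is a well-defined element of $\widetilde{E}^\infty$.

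With these lifts in hand, the forward direction of (2) is: given $[v]\in \widetilde{E}^0$ and $\widetilde{x}\in \widetilde{E}^\infty$, lift to $x\in E^\infty$ and apply weak $G$-transitivity of $E$ to get a path $\alpha\in E^*$ with $d(\alpha)=x(n,n)$ and $r(\alpha)=gv$; pushing $\alpha$ down via Lemma \ref{lem4.3}(2) yields $\widetilde{\alpha}$ in $\widetilde{E}$ with $\widetilde{d}(\widetilde{\alpha})=[d(\alpha)]=[x(n,n)]=\widetilde{x}(n,n)$ and $\widetilde{r}(\widetilde{\alpha})=[gv]=[v]$. The reverse direction is slightly more delicate because weak $G$-transitivity demands the exact equality $d(\alpha)=x(n,n)$ rather than up to $G$; here I push $x$ down to $\widetilde{x}$, obtain $\widetilde{\beta}\in \widetilde{E}^*$ by weak transitivity in $\widetilde{E}$ ending at $[v]$ and with $\widetilde{d}(\widetilde{\beta})=\widetilde{x}(n,n)$, and then lift $\widetilde{\beta}$ to a path $\alpha'\in E^*$ via Lemma \ref{lem4.3}(2); then $d(\alpha') \approx x(n,n)$, say $d(\alpha')=h\cdot x(n,n)$, and replacing $\alpha'$ by $\alpha := h^{-1}\alpha'$ gives $d(\alpha)=x(n,n)$ exactly while $r(\alpha)$ remains in the $G$-orbit of $v$.

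The only point that requires care is the well-definedness of the two infinite-path lifts and the final $G$-adjustment in the reverse direction of (2); everything else is packaging Lemma \ref{lem4.3} for paths of length $1$, $n$, and $\infty$. I expect no serious obstacle: the cocycle identity $d(g\cdot e)=g\cdot d(e)$ is all that is needed to verify the consistency of each construction.
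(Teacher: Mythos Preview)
Your proposal is correct and follows essentially the same approach as the paper: both derive (1) directly from Lemma~\ref{lem4.3}(2)--(3), and both prove (2) by lifting infinite paths between $E$ and $\widetilde{E}$ via the inductive construction from Lemma~\ref{lem4.3}(2) and then transporting the connecting finite path across. Your write-up is in fact more careful than the paper's on one point: the paper's proof of the direction ``$\widetilde{E}$ weakly transitive $\Rightarrow$ $E$ weakly $G$-transitive'' ends at $d(\gamma)\approx x(n,n)$ without explicitly adjusting by $h^{-1}$ to get the exact equality $d(\alpha)=x(n,n)$, whereas you spell this out.
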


\begin{proof}
Statement (1) follows from items (2) and (3) of Lemma \ref{lem4.3}. For (2), let $\e$ be transitive. Take an arbitrary infinite path $x\in E^\infty$ and some $v\in E^0$. By item (2) in Lemma \ref{lem4.3}, there is $\widetilde{y}\in \e^\infty$ such that $y(0,n)\approx x(0,n)$ for every $n\geq 0$. By transitivity, there exists $\widetilde{\gamma}\in \e^*$ such that $\widetilde{r}(\widetilde{\gamma})=[v]$ and $\widetilde{d}(\widetilde{\gamma})=[y(n,n)]$ for some $n$. Hence, $v\approx r(\gamma)$ and $d(\gamma)\approx y(n,n)\approx x(n,n)$. This follows that $E$ is $G$-transitive. The converse is analogous.
\end{proof}

\begin{prop}\label{prop4.6}
Let $(G,E,\varphi)$ be a self-similar graph over an amenable group $G$, and let $\e$ be an associated graph.
\begin{itemize}
  \item[(1)] In case the groupoid $\g$ is Hausdorff (see \cite[Theorem 4.2]{exe18}), then $\Og$ is simple if and only if
    \begin{itemize}
      \item[(a)] the graph $C^*$-algebra $C^*(\e)$ is simple, and
      \item[(b)] for $v\in E^0$ and $g\in G$, if the action of $g$ on the cylinder $Z(v)$ is trivial (i.e., $g x=x$ for every $x\in Z(v)$), then $g$ is slack at $v$.
    \end{itemize}
  \item[(2)] Suppose that $(G,E,\varphi)$ is pseudo free and for any $v\in E^0$ and $1_G\ne g\in G$, the action of $g$ on $Z(v)$ is nontrivial. If $C^*(\e)$ is purely infinite, then so is $\Og$.
\end{itemize}
\end{prop}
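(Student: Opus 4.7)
The plan is to use the groupoid picture $\Og\cong C^*_r(\g)$ supplied by Proposition~\ref{prop2.7}, together with Lemmas~\ref{lem4.3} and~\ref{lem4.5}, to transport questions about simplicity and pure infiniteness of $\Og$ to the ordinary graph $\e$, where the classical theory of graph $C^*$-algebras is available.

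For part (1), since $G$ is amenable and $\g$ is Hausdorff, we have $\Og\cong C^*(\g)\cong C^*_r(\g)$. By the standard Renault-type simplicity criterion for second-countable amenable Hausdorff ample groupoids, $C^*_r(\g)$ is simple if and only if $\g$ is both effective and minimal, so the task reduces to decoding these two properties combinatorially. By the Exel--Pardo analysis of $\g$, minimality is equivalent to $E$ being weakly $G$-transitive, while effectivity is equivalent to condition~(1) of Proposition~\ref{prop2.8} together with the slackness condition (b) of the statement (the pseudo-freeness in Proposition~\ref{prop2.8} is precisely what upgrades (b) to the stronger condition~(2) there; in the Hausdorff but not necessarily pseudo-free setting, (b) is the right substitute). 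Lemma~\ref{lem4.5} then translates weak $G$-transitivity of $E$ into cofinality of $\e$, and condition~(1) into Condition~(L) on $\e$; since $\e$ is row-finite by Lemma~\ref{lem4.3}(1), the classical simplicity criterion for row-finite graph $C^*$-algebras (\cite{rae05}) identifies condition~(a) with the conjunction of these two properties. Matching everything up yields the equivalence.

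For part (2), I would reduce to Proposition~\ref{prop3.4}(1) by verifying its hypotheses: pseudo-freeness and condition~(2) of Proposition~\ref{prop2.8} are already given, and it remains to verify condition~(1) of Proposition~\ref{prop2.8} and that every $v\in E^0$ receives a $G$-path from a $G$-circuit. Pure infiniteness of $C^*(\e)$ forces Condition~(L) on $\e$: if $\e$ contained a circuit without an entry, a corner of $C^*(\e)$ would be Morita equivalent to $C(\T)$ and hence contain a finite nonzero projection, contradicting pure infiniteness. Lemma~\ref{lem4.5}(1) then delivers condition~(1) of Proposition~\ref{prop2.8}. Next, pure infiniteness of the row-finite graph $C^*$-algebra $C^*(\e)$ yields, by the standard characterization, that every vertex of $\e$ connects to a circuit with an entry; Lemma~\ref{lem4.3}(2)--(3) translates this into the statement that every $v\in E^0$ receives a $G$-path from a $G$-circuit in $E$, as required.

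The main obstacle, located entirely in part (1), is to justify rigorously that in the general (not necessarily pseudo-free) Hausdorff setting, the slackness condition (b) is the correct analogue of condition~(2) of Proposition~\ref{prop2.8} for effectivity of $\g$. This amounts to revisiting the Exel--Pardo proof of effectivity (\cite[Corollary 14.13]{exe17}, \cite[Theorem 4.4]{exe18}) and isolating precisely where pseudo-freeness is used, then replacing it with the weaker slackness hypothesis at each occurrence. Once this has been verified, part (2) is essentially a translation via Lemmas~\ref{lem4.3} and~\ref{lem4.5}, modulo the invocation of standard characterizations of Condition~(L) and of pure infiniteness for row-finite graph $C^*$-algebras.
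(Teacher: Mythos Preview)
Your approach is essentially the same as the paper's. For part~(1) the paper simply cites Lemma~\ref{lem4.5} together with \cite[Theorem~4.5]{exe18}, and for part~(2) it deduces from pure infiniteness of $C^*(\e)$ that every circuit in $\e$ has an entry and every vertex connects to a circuit, then applies Lemma~\ref{lem4.5} and Proposition~\ref{prop3.4}(1)---exactly your route. Your ``main obstacle'' (showing that in the Hausdorff, not necessarily pseudo-free setting the slackness condition~(b) is the correct effectivity hypothesis) is not something you need to re-derive: this is precisely the content of \cite[Theorem~4.5]{exe18}, which already gives the simplicity characterization for $\Og$ in the Hausdorff case in terms of weak $G$-transitivity, condition~(1) of Proposition~\ref{prop2.8}, and the slackness condition~(b), so you may simply cite it rather than revisit the Exel--Pardo effectivity argument.
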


\begin{proof}
Statement (1) follows from Lemma \ref{lem4.5} and \cite[Theorem 4.5]{exe18}. For (2), if the graph $C^*$-algebra $C^*(\e)$ is purely infinite, then every circuit in $\e$ has an entry and every vertex $[v]\in \e^0$ can be reached from a circuit. By Lemma \ref{lem4.5}, every $G$-circuit has an entry and every $v\in E^0$ receives a $G$-path from a $G$-circuit. Now, Proposition \ref{prop3.4}(1) concludes that $\Og$ is purely infinite.
\end{proof}

\begin{ex}
The graph $\e$ in Example \ref{ex4.2} is weakly transitive and every circuit in $\e$ has an entry. Then $C^*(\e)$ is simple and purely infinite, and so is the $C^*$-algebra $\Og$ by Proposition \ref{prop4.6}.
\end{ex}

\begin{defn}[\cite{hje01}]
Let $(G,E,\varphi)$ be a self-similar graph. A {\it graph trace} on $E$ is map $T:E^0\rightarrow \mathbb{R}^+$ such that
\begin{enumerate}
  \item $T(r(e))\geq T(d(e))$ for every $e\in E^1$, and
  \item $T(v)=\sum_{r(e)=v}T(d(e))$ for every $v\in E^0$.
\end{enumerate}
A {\it graph $G$-trace} in $E$ is a graph trace $T:E^0\rightarrow \mathbb{R}^+$ such that $T(v)=T(w)$ for every $v\approx w$ in $E^0$.
\end{defn}

\begin{thm}\label{thm4.9}
Let $(G,E,\varphi)$ be a pseudo free self-similar graph over an amenable group $G$. Suppose that $\Og$ is simple. Then the following are equivalent.
\begin{itemize}
  \item[(1)] $\Og$ is stably finite.
  \item[(2)] $\Og$ is quasi diagonal.
  \item[(3)] $(G,E,\varphi)$ has a nonzero graph $G$-trace.
  \item[(4)] $\e$ has a nonzero graph trace.
  \item[(5)] $\e$ contains no circuits.
  \item[(6)] $E$ contains no $G$-circuits.
\end{itemize}
\end{thm}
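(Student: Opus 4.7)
My plan is to establish the cycle $(5)\Leftrightarrow(6)$, $(3)\Leftrightarrow(4)\Leftrightarrow(5)$, $(1)\Rightarrow(6)$, and $(3)\Rightarrow(2)\Rightarrow(1)$, leveraging the fact that $C^*(\e)$ inherits simplicity from $\Og$ via Proposition \ref{prop4.6}(1).

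The combinatorial equivalences come first. $(5)\Leftrightarrow(6)$ is immediate from Lemma \ref{lem4.3}(3). For $(3)\Leftrightarrow(4)$, a graph $G$-trace $T$ on $E$ is constant on $\approx$-classes by definition, so it descends to $\widetilde T([v]):=T(v)$ on $\e^0$; I would verify the summation axiom at representatives $v\in\Omega$ using the bijection $r^{-1}(v)\leftrightarrow\widetilde r^{-1}([v])$ built into $\e$, and observe that the inverse lift $T(v):=\widetilde T([v])$ is $G$-invariant by construction, with the summation axiom transferring to all of $E^0$ via that $G$-invariance. For $(4)\Leftrightarrow(5)$ I would invoke the classical theory of simple graph $C^*$-algebras: $\e$ is row-finite (Lemma \ref{lem4.3}(1)) and $C^*(\e)$ is simple, so the absence of circuits is equivalent to $C^*(\e)$ being AF, which in the simple case is equivalent to the existence of a nonzero graph trace on $\e$ (cf.\ \cite{hje01,rae05}).

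The implication $(1)\Rightarrow(6)$ is the contrapositive of Corollary \ref{cor3.6}, since a $G$-circuit would force $\Og$ to be purely infinite. It remains to prove $(3)\Rightarrow(2)\Rightarrow(1)$. Given a nonzero graph $G$-trace $T$ (normalised to be a probability weighting on $\Omega$), I would construct a $\g$-invariant Radon probability measure $\mu$ on $E^\infty$ by prescribing $\mu(Z(\alpha))$ in terms of $T(d(\alpha))$; consistency on cylinders follows from the trace summation axiom, while invariance under the basic bisections $\Theta(\alpha,g,\beta;Z(\gamma))$ follows from the $G$-invariance of $T$ together with the cocycle identities for $\varphi$. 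Via Proposition \ref{prop2.7}(2), $\mu$ yields a nonzero---hence by simplicity, faithful---tracial state $\tau$ on $\Og\cong C^*_r(\g)$, establishing $(1)$. For $(2)$, since $\Og$ is simple, separable, nuclear (Proposition \ref{prop2.7}(2)), in the UCT class (Tu's theorem for amenable groupoids), and admits $\tau$, the Tikuisis--White--Winter theorem yields quasidiagonality; and $(2)\Rightarrow(1)$ is the standard fact that unital quasidiagonal $C^*$-algebras are stably finite.

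The main obstacle I anticipate is verifying $\g$-invariance (as opposed to mere shift-invariance) of $\mu$. One must extend $T$ to finite paths so that $T(g\alpha)=T(\alpha)$ for every $g\in G$ and $\alpha\in E^*$, which I would prove by induction on $|\alpha|$ using the cocycle identity $g(\alpha_1\alpha_2)=(g\alpha_1)(\varphi(g,\alpha_1)\alpha_2)$, and then carry this invariance through the germ action to confirm that $\mu$ is preserved by every bisection of the form $\Theta(\alpha,g,\beta;Z(\gamma))$.
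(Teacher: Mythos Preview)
Your proposal is correct and follows essentially the same logical skeleton as the paper's proof: the combinatorial equivalences $(3)\Leftrightarrow(4)\Leftrightarrow(5)\Leftrightarrow(6)$, the implication $(1)\Rightarrow(6)$ via Corollary~\ref{cor3.6}, and the closing step $(3)\Rightarrow(1)$ by producing a faithful (semi)finite trace on $\Og$ from the graph $G$-trace. Two tactical differences are worth noting. First, for $(5)\Rightarrow(4)$ the paper argues directly that $C^*(\e)$ is AF and extracts a graph trace from traces on the finite-dimensional approximants, without invoking simplicity of $C^*(\e)$; your route through Proposition~\ref{prop4.6}(1) and the classical dichotomy for simple graph algebras is equally valid but less self-contained. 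Second, for $(3)\Rightarrow(1)$ the paper bypasses the explicit construction of a $\g$-invariant measure by composing the linear functional $t(s_\alpha s_\alpha^*):=T(d(\alpha))$ on the diagonal $\Og^0$ with the canonical faithful conditional expectation $\Og\to\Og^0$; this is exactly the trace your invariant measure would produce, so the ``main obstacle'' you flag is the same verification in different clothing, and the paper treats it just as lightly. Finally, for $(1)\Leftrightarrow(2)$ the paper simply quotes \cite[Corollary 6.6]{rai18}, which packages the Tikuisis--White--Winter input you spell out.
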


\begin{proof}
Statements (1) and (2) are equivalent by \cite[Corollary 6.6]{rai18}.

(1) $\Rightarrow$ (6). If $E$ has a $G$-circuit, then $\Og$ is purely infinite by Corollary \ref{cor3.6}. In particular, $\Og$ is not stably finite, a contradiction.

(6) $\Rightarrow$ (5) follows from Lemma \ref{lem4.3}(3).

(5) $\Rightarrow$ (4). Suppose that $\e$ has no circuits. Arrange $\e^0=\{[v_1],[v_2],\ldots\}$. For each $n\geq 1$, let $F_n$ be the full subgraph of $\e$ containing all $\bigcup_{i=1}^n \widetilde{r}^{-1}([v_i])$. Since $F_n$'s have no circuits, \cite[Corollary 2.3]{kum98} implies that $C^*(F_1)\subseteq C^*(F_2) \subseteq \ldots$ is a sequence of finite dimensional $C^*$-subalgebras of $C^*(\e)$ such that $C^*(\e)=\lim C^*(F_n)$ (i.e., $C^*(\e)$ is AF). Thus there exist bounded traces $\tau_n:C^*(F_n)\rightarrow \mathbb{C}$ such that $\tau_n|_{C^*(F_i)}$ equals with $\tau_i$ for $i\leq n$. This induces a semifinite trace $\tau=\lim \tau_n$ on $C^*(\e)$. Therefore, if $C^*(\e)=C^*(t_e,q_{[v]})$, we obtain the nonzero graph trace $T:\e^0\rightarrow \R^+$, by $T([v])=\tau(q_{[v]})$, on $\e$.

(4) $\Rightarrow$ (3). Suppose that $T$ is a nonzero graph trace on $\e$. Note that, since the action of $G$ on $E^1$ gives automorphisms respecting to the range and domain, for any $v\neq w\in E^0$ with $w=g v$, the map $e\mapsto g e$ is a bijection from $r^{-1}(v)$ onto $r^{-1}(w)$. In particular, $|r^{-1}(w)|=|r^{-1}(v)|$. Being this fact in mind, one may easily see that the map $T':E^0\rightarrow \R^+$, defined by $T'(v):=T([v])$, is a nonzero graph $G$-trace on $E$, as desired.

(3) $\Rightarrow$ (1). By \cite[Proposition II.4.8]{ren80}, there exists a faithful conditional expectation $\pi:C^*(\g)\rightarrow C_0(\g^{(0)})$ such that $\pi(f)=f|_{\g^{(0)}}$ for all $f\in C_c(\g^{(0)})$. Note that the isomorphism $\psi:\Og \rightarrow C^*(\g)$ in Proposition \ref{prop2.7}(1) maps the core $\Og^0:=\overline{\mathrm{span}}\{s_\alpha s_\alpha^*:\alpha\in E^*\}$ onto $C_0(\g^{(0)})$. Hence $\varphi:=\psi^{-1}\circ \pi\circ \psi$ is a faithful conditional expectation from $\Og$ onto $\Og^0$ such that
$$\phi(s_\alpha u_g s_\beta^*)=\left\{
                                 \begin{array}{ll}
                                   s_\alpha s_\alpha^* & \beta=\alpha,~ g=1_G \\
                                   0 & \mathrm{otherwise}
                                 \end{array}
                               \right.
$$
for every $\alpha,\beta\in E^*$ and $g\in G$.

Now suppose that $T$ is a nonzero graph $G$-trace on $E$. Define $t:\Og^0\rightarrow \C$ by $t(s_\alpha s_\alpha^*)=T(d(\alpha))$, which is a linear functional on $\Og^0$. So, we may easily verify that $\tau:=t\circ \phi$ is a semifinite trace on $\Og$ such that $0<\tau(s_v)<\infty$ for all $v\in E^0$. Moreover, $\tau$ is faithful because $\Og$ is simple. Thus \cite[Corollary 6.6]{rai18} yields that $\Og$ is stably finite.
\end{proof}

Recall from  \cite[Corollary 10.16]{exe17} that if $G$ is amenable, then $\Og$ is a nuclear $C^*$-algebra. So, combining Corollary \ref{cor3.6} and Theorem \ref{thm4.9} implies the following dichotomy for simple $\Og$.

\begin{cor}\label{cor4.10}
Let $(G,E,\varphi)$ be a pseudo free self-similar graph over an amenable group $G$. Suppose that $\Og$ is simple. Then
\begin{itemize}
  \item[(1)] If $E$ has a $G$-circuit, then $\Og$ is purely infinite. In this case, $\Og$ is a Kirchberg algebra, and we have $K_0(\Og)= D(\Og)\setminus \{0\}$ whenever $|E^0|<\infty$.
  \item[(2)] Otherwise, $\Og$ is stably finite. In this case, $(K_0(\Og),K_0(\Og)^+)$ is an ordered abelian group (see \cite[Proposition 5.1.5(iv)]{ror00}).
\end{itemize}
\end{cor}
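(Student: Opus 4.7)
The corollary is essentially an assembly of the results already in Sections 3 and 4, so the plan is mainly to keep track of which hypotheses transfer to which conclusions. Throughout, since $\Og$ is assumed simple and $(G,E,\varphi)$ is pseudo free, \cite[Theorem 4.5]{exe18} guarantees that conditions (1) and (2) of Proposition \ref{prop2.8} hold; consequently $\g$ is effective and every result in Sections 3 and 4 that requires those two conditions applies directly.

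For part (1), assume that $(G,E,\varphi)$ contains a $G$-circuit. The first step is to invoke Corollary \ref{cor3.6}, which gives that $\Og$ is purely infinite. To identify $\Og$ as a Kirchberg algebra, I would then collect the remaining ingredients: separability is immediate from our standing countability assumption, simplicity is in the hypothesis, and nuclearity follows from the amenability of $G$ via \cite[Corollary 10.16]{exe17}. When in addition $|E^0|<\infty$, the algebra $\Og$ is unital, so Theorem \ref{thm3.7}(2) applies (its hypotheses---pseudo freeness, amenability of $G$, finiteness of $E^0$, and the two conditions of Proposition \ref{prop2.8}---are all in force), and the ``only if'' direction of that theorem, together with the standard fact recalled just before Theorem \ref{thm3.7}, yields $K_0(\Og)=\mathcal{D}(\Og)\setminus\{0\}$.

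For part (2), suppose that $E$ contains no $G$-circuit. The implication (6)$\Rightarrow$(1) of Theorem \ref{thm4.9} is exactly what is needed: it gives that $\Og$ is stably finite. The auxiliary statement about $(K_0(\Og),K_0(\Og)^+)$ being an ordered abelian group is then a standard consequence of stable finiteness for unital (or, more generally, exact stably finite) $C^*$-algebras, recorded in \cite[Proposition 5.1.5(iv)]{ror00}, and can be quoted directly.

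There is no real obstacle here; the corollary is a packaging statement. The only points that need a line of justification are (a) that simplicity upgrades to all the hypotheses required by Corollary \ref{cor3.6}, Theorem \ref{thm3.7}(2), and Theorem \ref{thm4.9} (handled by \cite[Theorem 4.5]{exe18}), and (b) that ``Kirchberg algebra'' really requires nuclearity in addition to pure infiniteness and simplicity, which is why the amenability hypothesis on $G$ is used via \cite[Corollary 10.16]{exe17}.
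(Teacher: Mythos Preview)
Your proposal is correct and follows essentially the same approach as the paper, which simply says that the corollary follows by combining Corollary~\ref{cor3.6} (for part (1)) and Theorem~\ref{thm4.9} (for part (2)) together with nuclearity from \cite[Corollary 10.16]{exe17}. Your write-up spells out the auxiliary $K_0$ claims via Theorem~\ref{thm3.7}(2) and \cite[Proposition 5.1.5(iv)]{ror00} more explicitly than the paper does, but the underlying argument is identical.
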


\begin{rem}
Note that in case $\Og$ is stably finite, the embedding $\iota:C^*(E)\hookrightarrow \Og$ of \cite[Section 11]{exe17} induces an embedding $K_0(\iota):K_0(C^*(E))\hookrightarrow K_0(\Og)$ defined by $K_0(\iota)([p]_0):=[\iota(p)]_0$, where the map $\iota$ is naturally extended on $M_\infty(C^*(E))$ into $M_\infty(\Og)$. Indeed, if $p\in M_\infty(C^*(E))$ is a projection with $[\iota(p)]_0=0$, then we must have $\iota(p)=0$ because $M_\infty(\Og)$ has no infinite projection, and hence $p=0$.
\end{rem}


\section{Pure infiniteness of self-similar $k$-graph $C^*$-algebras}

In this section, we consider the pure infiniteness of self-similar $k$-graph $C^*$-algebras. Let us first recall the definitions of self-similar $k$-graphs and their $C^*$-algebras from \cite{li18}. Fix $k\in\N\cup\{\infty\}$ and let $\La=(\La^0,\La,r,s)$ be a row-finite $k$-graph with no sources (we refer the reader to \cite{rae05} for basic definitions and concepts about $k$-graphs and associated $C^*$-algebras). Consider $\N^k$ as a category with a single object $0$ and the coordinatewise partial order $\leq$. Let $\Omega_k:=\{(p,q):p,q\in \N^k, p\leq q\}$. An {\it infinite path in $\La$} is a morphism $x:\Omega_k\rightarrow\La$ with the range $r(x):=x(0,0)$. We write by $\La^\infty$ the set of infinite paths in $\La$.

Let $G$ be a (discrete and countable) group. {\it An action $G\curvearrowright \La$} is a map $G\times \La\rightarrow \La$, $(g,\la)\rightarrow g \la$, which gives a graph automorphism preserving the degree map for every $g\in G$.

\begin{defn}[\cite{li18}]\label{defn5.1}
A {\it self-similar $k$-graph} is a triple $(G,\La,\varphi)$, where $\La$ is a $k$-graph, $G$ is a group acting on $\La$, and $\varphi:G\times \La\rightarrow \La$ is a cocycle for $G\curvearrowright \La$ with the property
$$\varphi(g,\la).v=g v \hspace{5mm} (g\in G,v\in \La^0,\la\in \La).$$
\end{defn}

 Following \cite{li18}, we consider only self-similar $k$-graphs $(G,\La,\varphi)$ for {\bf row-finite} and {\bf source-free} $k$-graphs with $|\La^0|<\infty$. We will write $(G,\La,\varphi)$ by $(G,\La)$ for simplicity. Note that $\varphi$ was called the restriction map in \cite{li18} and each $\varphi(g,\la)$ was denoted by $g|_\la$ there.

\begin{defn}
Let $(G,\La)$ be a self-similar $k$-graph. We say that
\begin{enumerate}
  \item $(G,\La)$ is {\it pseudo free}, if $g \la=\la$ and $\varphi(g,\la)=1_G$ imply $g=1_G$.
  \item $(G,\La)$ is {\it $G$-aperiodic} if for any $v\in \La^0$, there exists $x\in v\La^\infty$ such that $x(p,\infty)=g  x(q,\infty)$ implies $g=1_G$ and $p=q$ for $p,q\in\N^k$ and $g\in G$.
  \item $(G,\La)$ is {\it $G$-cofinal} if for every $x\in \La^\infty$ and $v\in\La^0$, there exist $p\in\N^k$, $\mu\in\La$, and $g\in G$ such that $s(\mu)=x(p,p)$ and $r(\mu)=g v$.
\end{enumerate}
\end{defn}

\begin{defn}
Let $(G,\La)$ be a self-similar $k$-graph as in Definition \ref{defn5.1} with $|\La^0|<\infty$. The $C^*$-algebra $\Ol$ associated to $(G,\La)$ is the universal $C^*$-algebra generated by $\{s_\la:\la\in \La\}$ and $\{u_g:g\in G\}$ such that
\begin{enumerate}
  \item $\{s_\la:\la\in \La\}$ is a Cuntz-Krieger $\La$-family in the sense of \cite{kum00}.
  \item $u:G\rightarrow \Ol$, given by $g\mapsto u_g$, is a unitary $*$-representation of $G$.
  \item $u_gs_\la=s_{g \la}u_{\varphi(g,\la)}$ for every $g\in G$ and $\la\in \La$.
\end{enumerate}
\end{defn}

Similar to the construction of $\g$ in Section 2.4, Li and Yang associated an ample groupoid $\mathcal{G}_{G,\La}$ in \cite[Section 5.1]{li18} such that $\mathcal{O}_{G,\La}\cong C^*(\mathcal{G}_{G,\La})\cong C_r^*(\mathcal{G}_{G,\La})$ when $G$ is amenable and $(G,\La)$ is pseudo free \cite[Theorem 5.9]{li18}. In particular, the unit space $\mathcal{G}_{G,\La}^{(0)}$ is homeomorphic to $\La^\infty$ endowed with the topology generated by cylinders $Z(\la):=\{\la x:x\in\La^\infty\}$.

Recall that a {\it circuit} in $\La$ is a path $\alpha\in \La$ with $r(\alpha)=s(\alpha)$. $\tau\in \La$ is called an {\it entry} for $\alpha$ if $r(\tau)=r(\alpha)$ and there are no common extensions for $\alpha$ and $\tau$ (i.e., $\alpha\mu\neq\tau\nu$ for all $\mu,\nu\in\La$).

\begin{thm}
Let $(G,\Lambda)$ be a pseudo free self-similar $k$-graph with $|\Lambda^0|<\infty$ over an amenable group $G$. If $\Lambda$ is $G$-aperiodic, then $\mathcal{O}_{G,\Lambda}$ is purely infinite. In particular, if $\Lambda$ is also $G$-cofinal, then $\mathcal{O}_{G,\Lambda}$ is a Kirchberg algebra.
\end{thm}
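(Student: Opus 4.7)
The plan is to transport the argument of Theorem~\ref{thm3.1} and Proposition~\ref{prop3.4}(1) to the $k$-graph setting via the Li--Yang groupoid model $\mathcal{G}_{G,\La}$. Under the standing hypotheses (pseudo freeness of $(G,\La)$ and amenability of $G$), \cite[Theorem~5.9]{li18} yields $\Ol\cong C^*(\mathcal{G}_{G,\La})\cong C^*_r(\mathcal{G}_{G,\La})$, where $\mathcal{G}_{G,\La}$ is a second-countable Hausdorff ample groupoid with unit space identified with $\La^\infty$. The $G$-aperiodicity hypothesis is exactly the $k$-graph analogue of the condition making the groupoid effective (cf.~\cite{li18} and Proposition~\ref{prop2.8}), and $\{Z(\la):\la\in\La\}$ is a basis of compact open subsets of $\La^\infty$. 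Hence Proposition~\ref{prop2.3} applies, and because $1_{Z(\la)}=s_\la s_\la^*\sim s_\la^* s_\la=s_{s(\la)}$ with infiniteness preserved by Murray--von Neumann equivalence, the claim reduces to showing that every vertex projection $s_v$ ($v\in\La^0$) is infinite in $\Ol$.

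To see this, fix $v\in\La^0$ and use $G$-aperiodicity to pick $x\in v\La^\infty$ such that $x(p,\infty)=g\cdot x(q,\infty)$ forces $g=1_G$ and $p=q$. Since $|\La^0|<\infty$, pigeonholing the vertices $x(ne_1,ne_1)$ for $n\in\N$ (or along any coordinate ray) supplies $p<q$ in $\N^k$ with $x(p,p)=x(q,q)=:w$. Set $\alpha:=x(p,q)$, a circuit at $w$. I claim $\alpha$ admits an entry $\tau\in w\La$, i.e.\ one with no common extension with $\alpha$. If not, then every finite path starting at $w$ has a common extension with $\alpha$; iterating this factorization property against longer and longer prefixes of $x(q,\infty)$ forces $x(p,\infty)=x(q,\infty)$, contradicting aperiodicity since $g=1_G$ while $p\neq q$. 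With such an entry $\tau$, the projections $s_\alpha s_\alpha^*$ and $s_\tau s_\tau^*$ are mutually orthogonal subprojections of $s_w$, so
\[
s_w\ \geq\ s_\alpha s_\alpha^*+s_\tau s_\tau^*\ >\ s_\alpha s_\alpha^*\ \sim\ s_\alpha^* s_\alpha\ =\ s_{s(\alpha)}\ =\ s_w,
\]
proving $s_w$ infinite. The finite path $x(0,p)$ satisfies $r(x(0,p))=v$ and $s(x(0,p))=w$, whence $s_v\geq s_{x(0,p)}s_{x(0,p)}^*\sim s_w$ is infinite too, completing the proof of pure infiniteness.

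For the ``in particular'' assertion, adding $G$-cofinality delivers simplicity via \cite[Theorem~6.6(ii)]{li18}, amenability of $G$ gives nuclearity via \cite[Theorem~6.6(i)]{li18}, and separability is automatic from countability of $G$ and $\La$; combined with the pure infiniteness just obtained, $\Ol$ is a Kirchberg algebra.

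The most delicate step is the implication ``no entry for $\alpha$ $\Longrightarrow$ aperiodicity violation.'' For ordinary graphs this is a one-line consequence of the uniqueness of the infinite path through a no-entry circuit; in the $k$-graph setting one must carefully unpack the factorization property, iteratively aligning powers $\alpha^n$ with longer and longer initial segments of $x(q,\infty)$ using the common-extension hypothesis, then pass to a limit to identify the tails $x(p,\infty)$ and $x(q,\infty)$. This is where the $k$-graph combinatorics do the real work of the proof.
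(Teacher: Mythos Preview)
Your overall architecture matches the paper's: both reduce via the Li--Yang groupoid model, effectiveness from $G$-aperiodicity, and Proposition~\ref{prop2.3} to showing each $s_v$ is infinite, and both accomplish the latter by producing a circuit with an entry that connects to $v$. The paper does not argue the ``circuit with entry'' step directly; it observes that $G$-aperiodicity implies ordinary aperiodicity of $\La$ and then invokes \cite[Lemma~6.1]{lar18} to obtain an (initial) circuit with an entry reaching $v$. Your attempt to replace that citation by a direct argument is where the proposal becomes incomplete.

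The gap is in your claim that ``no entry for $\alpha$'' forces $x(p,\infty)=x(q,\infty)$. Because you pigeonhole along a single coordinate ray, $m:=d(\alpha)=q-p$ is supported on one coordinate, say $m=m_1e_1$. From ``no entry'' one does get $w\La^{m}=\{\alpha\}$ and hence $x(q,q+nm)=\alpha^n$ for all $n$, exactly the ``iterative alignment'' you describe; but this determines $x(q,\infty)$ only in the $e_1$-direction and says nothing about $x(q,q+e_j)$ for $j\neq 1$. Concretely, for the $2$-graph with one vertex, a single $e_1$-edge $a$, two $e_2$-edges $b,c$, and commuting squares $ab=ba$, $ac=ca$, the edge $a$ has no entry yet $w\La^\infty$ is uncountable; the common-extension hypothesis does not pin down the red tail. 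Thus the limit you propose does not identify the two tails.

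The repair is minor: pigeonhole along the diagonal, i.e.\ among the vertices $x\bigl(n(1,\dots,1),\,n(1,\dots,1)\bigr)$, so that $m=d(\alpha)$ has all coordinates strictly positive. Then ``no entry'' gives $w\La^{m}=\{\alpha\}$, hence for every $y\in w\La^\infty$ and every $N\in\N^k$ one can choose $n$ with $nm\geq N$ and conclude $y(0,N)=\alpha^n(0,N)$; so $w\La^\infty=\{\alpha^\infty\}$, whence $x(p,\infty)=x(q,\infty)=\alpha^\infty$ and aperiodicity of $x$ is violated. With this adjustment your direct argument goes through and is essentially the content of the lemma the paper cites.
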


\begin{proof}
Let $\mathcal{G}_{G,\Lambda}$ be the groupoid associated to $(G,\Lambda)$. Then $\mathcal{G}_{G,\Lambda}$ is amenable and effective \cite[Proposition 6.5]{li18}, and we thus have $C^*(\mathcal{G}_{G,\Lambda})=C^*_r(\mathcal{G}_{G,\Lambda})=\mathcal{O}_{G,\Lambda}$ by \cite[Theorem 5.9]{li18}. We know that the cylinders $\{Z(\la):\la\in \Lambda\}$ form a basis of compact open sets for the topology on $\Lambda^\infty=\mathcal{G}_{G,\Lambda}^{(0)}$. So, in light of Proposition \ref{prop2.3}, it suffices to prove that each $1_{Z(\la)}$ is an infinite projection for $\la\in \Lambda$. For this, since
$$1_{Z(\la)}=s_\la s_\la^*\sim s_\la^* s_\la=s_{s(\la)},$$
we show all $s_v$'s are infinite in $\mathcal{O}_{G,\Lambda}$ for $v\in \Lambda^0$.

So fix an arbitrary $v\in \Lambda^0$. We claim that $v$ reaches from a circuit with an entry. To see this, take some $x\in v\Lambda^\infty$. For any $t\in \mathbb{N}$, write $\textbf{t}:=(t,0,0,\ldots)\in \mathbb{N}^k$. Since $\{x(\textbf{t},\textbf{t}):t\geq 1\}\subseteq \Lambda^0$ is finite, there are $t_1<t_2$ such that $x(\textbf{t}_1,\textbf{t}_1)=x(\textbf{t}_2,\textbf{t}_2)$. Hence $x(\textbf{t}_1,\textbf{t}_2)$ is a circuit in $\Lambda$, which connects to $v$ by $x(0,\textbf{t}_1)\in\La$. Note that the $G$-aperiodicity yields clearly the periodicity of $\La$. Hence, one may follow \cite[Lemma 6.1]{lar18} to find an (initial) circuit $\alpha$ with an entry $\tau$ connecting to $v$, as claimed.

Since $\alpha$ and $\tau$ have no common extensions, one may compute that $s_\alpha s_\alpha^*$ and $s_\tau s_\tau^*$ are orthogonal (by applying \cite[Lemma 9.4]{rae05}). Thus, by the Cuntz-Krieger relations we have
$$s_{r(\alpha)}\geq s_\alpha s_\alpha^*+ s_\tau s_\tau^*>s_\alpha s_\alpha^*\sim s_\alpha^* s_\alpha=s_{s(\alpha)}=s_{r(\alpha)},$$
so $s_{r(\alpha)}$ is infinite. Moreover, if $\lambda$ connects $r(\alpha)$ to $v$, then
$$s_v\geq s_\lambda s_\lambda^*\sim s_\lambda^*s_\lambda=s_{s(\lambda)}=s_{r(\alpha)},$$
which says that $s_v$ is an infinite projection in $\mathcal{O}_{G,\Lambda}$ as well. Since $v\in \Lambda^0$ was arbitrary, this deduces that $\mathcal{O}_{G,\Lambda}$ is purely infinite by Proposition \ref{prop2.3}.

For the last statement, if moreover $\Lambda$ is $G$-cofinal, then \cite[Theorem 6.6]{li18} implies that $\mathcal{O}_{G,\Lambda}$ is nuclear and simple, which satisfies UCT. Hence, $\mathcal{O}_{G,\Lambda}$ is a Kirchberg algebra.
\end{proof}

\begin{cor}[{See \cite[Theorem 6.13]{li18}}]
Let $(G,\Lambda)$ be a pseudo free self-similar $k$-graph with $|\Lambda^0|<\infty$ over an amenable group $G$. Whenever $\mathcal{O}_{G,\Lambda}$ is simple, then it is purely infinite too.
\end{cor}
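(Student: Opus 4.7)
The plan is to reduce the corollary to a direct application of the preceding theorem. Since that theorem shows pure infiniteness under the hypothesis that $\Lambda$ is $G$-aperiodic (with the standing assumptions on $(G,\Lambda)$ already in force), what needs to be checked is that simplicity of $\mathcal{O}_{G,\Lambda}$ forces $G$-aperiodicity.

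For this, I would invoke the simplicity characterization from \cite[Theorem 6.6]{li18}, which states that, under the standing hypotheses (pseudo freeness, amenability of $G$, finiteness of $\Lambda^0$, row-finiteness and absence of sources), $\mathcal{O}_{G,\Lambda}$ is simple if and only if $(G,\Lambda)$ is both $G$-aperiodic and $G$-cofinal. Assuming $\mathcal{O}_{G,\Lambda}$ is simple, this gives both of those properties for free; in particular, $G$-aperiodicity holds.

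With $G$-aperiodicity in hand, the previous theorem applies and yields that $\mathcal{O}_{G,\Lambda}$ is purely infinite. (Since the additional $G$-cofinality is also available from the same appeal to \cite[Theorem 6.6]{li18}, one could equally well state the stronger conclusion that $\mathcal{O}_{G,\Lambda}$ is a Kirchberg algebra, matching the ``in particular'' clause of the theorem.) No serious obstacle is expected: the whole work has been done in the theorem, and the corollary is essentially a bookkeeping step that pulls the required hypotheses out of the simplicity assumption.
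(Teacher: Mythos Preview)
Your proposal is correct and matches the paper's intended (unwritten) argument: the corollary is immediate from the preceding theorem once one extracts $G$-aperiodicity from simplicity via \cite[Theorem 6.6]{li18}. There is nothing to add.
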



\end{document}